\newcommand{\rd}{\mathrm{d}}
\newcommand{\bD}{\mathbb{D}}
\newcommand{\bC}{\mathbb{C}}
\newcommand{\C}{\mathbb{C}}
\newcommand{\bN}{\mathbb{N}}
\newcommand{\bZ}{\mathbb{Z}}
\newcommand{\bP}{\mathbb{P}}
\renewcommand{\P}{\mathbb{P}}
\newcommand{\bR}{\mathbb{R}}
\newcommand{\bQ}{\mathbb{Q}}
\newcommand{\sP}{\mathsf{P}}
\newcommand{\cS}{\mathcal{S}}
\newcommand{\sK}{\mathsf{K}}
\newcommand{\sA}{\mathsf{A}}
\newcommand{\sF}{\mathsf{F}}
\newcommand{\sJ}{\mathsf{J}}
\newcommand{\cZ}{\mathcal{Z}}
\newcommand{\diag}{\operatorname{diag}}
\newcommand{\cO}{\mathcal{O}}
\newcommand{\can}{\operatorname{can}}
\newcommand{\sI}{\mathsf{I}}
\newcommand{\supp}{\operatorname{supp}}
\newcommand{\Vv}{\boldsymbol{v}}
\theoremstyle{plain}
\newtheorem{theorem}{Theorem}[section]
\newtheorem{lemma}[theorem]{Lemma}
\newtheorem{mainth}{Theorem}
\theoremstyle{definition}
\newtheorem*{acknowledgement}{Acknowledgement}
\theoremstyle{remark}
\newtheorem{remark}[theorem]{Remark}
\numberwithin{equation}{section}
\begin{document} 
\title[]{Value distribution of derivatives in polynomial dynamics}

\author{Y\^usuke Okuyama}
\address{Division of Mathematics, Kyoto Institute of Technology, Sakyo-ku, Kyoto 606-8585 JAPAN}
\email{okuyama@kit.ac.jp}
\author{Gabriel Vigny}
\address{LAMFA, UPJV, 33 rue Saint-Leu, 80039 AMIENS Cedex 1, FRANCE}
\email{gabriel.vigny@u-picardie.fr}

\date{\today}

\subjclass[2010]{Primary 37F10: Secondary 37P30, 32H50}

\keywords{value distribution, higher derivative, iterated polynomials,
complex dynamics, non-archimedean dynamics, arithmetic dynamics, 
H\'enon-type polynomial automorphism of $\bC^2$}

\begin{abstract}
For every $m\in\bN$, 
we establish the equidistribution of the sequence of the averaged pull-backs
of a Dirac measure at any given
value in $\bC\setminus\{0\}$ under the $m$-th order
derivatives of the iterates of a polynomials $f\in\bC[z]$ of degree $d>1$ 
towards the harmonic measure of the filled-in Julia set of $f$
with pole at $\infty$. We also establish 
non-archimedean and arithmetic counterparts
using the potential theory on the Berkovich projective line
and the adelic equidistribution theory over a number field $k$
for a sequence of effective divisors on $\bP^1(\overline{k})$
having small diagonals and small heights. 

We show a similar result
on the equidistribution of the analytic sets where 
the derivative of each iterate of a H\'enon-type polynomial automorphism
of $\bC^2$ has a given eigenvalue. 
\end{abstract}

\maketitle 

\section{Introduction}
Let $f\in\bC[z]$ be a polynomial of degree $d>1$. The {\em filled-in Julia set} 
\begin{gather*}
K(f):=\Bigl\{z\in\bC:\limsup_{n\to\infty}|f^n(z)|<\infty\Bigr\} 
\end{gather*}
of $f$ is a {\em non-polar} compact subset in $\bC$. Let $g_f$
be the {\em Green function} of $K(f)$ with pole at $\infty$, 
regarding $\bP^1$ as $\bC\cup\{\infty\}$ (see e.g.\ \cite[\S4.4]{ransford}).
We extend $g_f$ as $=0$ on $K(f)$. For every $n\in\bN$, 
the difference $g_f-(\log\max\{1,|f^n|\})/d^n$ on $\bC$
is harmonic and bounded
near $\infty$ so it admits a harmonic
extension across $\infty$, and we have the estimate
\begin{gather}
g_f-\frac{\log\max\{1,|f^n|\}}{d^n}=O(d^{-n})\quad\text{as }n\to\infty\label{eq:Greeniterate}
\end{gather}
on $\bP^1$ uniformly.

Let us denote by $\delta_a$ the Dirac measure on $\bP^1$ at each $a\in\bP^1$.
The {\em harmonic measure} of $K(f)$ with pole at $\infty$
is the probability measure
\begin{gather*}
\mu_f:=\Delta g_f +\delta_\infty\quad\text{on }\bP^1,
\end{gather*}
which has no atoms on $\bP^1$ and is supported by $\partial K(f)$.
The exceptional set of $f$ is defined as
\begin{gather*}
 E(f):=\{a\in\bP^1:\#\bigcup_{n\in\bN\cup\{0\}}f^{-n}(a)<\infty\},
\end{gather*}
which 
consists of $\infty$ ($f^{-1}(\infty)=\{\infty\}$)
and at most one point $b\in\bC$ ($f^{-1}(b)=\{b\}$).
For every $h\in\bC(z)$ of $\deg h>0$ and every $a\in\bP^1$,
by the definition of the pullback operator $h^*$, we have 
$h^*\delta_a=\sum_{w\in h^{-1}(a)}(\deg_w h)\delta_a$ on $\bP^1$,
where $\deg_w h$ is the local degree of $h$ at $w$. 

Brolin \cite{brolin} studied the value distribution
of the iteration sequence $(f^n:\bP^1\to\bP^1)$ of $f$
and established that
{\em for every $a\in\bC\setminus E(f)$,}
\begin{gather*}
\lim_{n\to\infty}\frac{(f^n)^*\delta_a}{d^n}=\mu_f\quad\text{weakly on } \bP^1.
\end{gather*}
This equidistribution of pullbacks of points under iterations
initiated the study of value distribution of complex dynamics 
(see e.g.\ \cite[\S 6.5]{ransford}, \cite[\S VIII]{rudiments}, 
\cite{dinhsibony2,RS97}). 
In \cite[\S 2]{Gauthier_Vigny_derivative} and \cite[Theorem 1]{OkuyamaDerivatives},
a similar equidistribution statement 
replacing $f^n$ with the first order derivative $(f^n)'$ of $f^n$ 
has been proved first for $a\in\bC$ outside a polar set and 
then for any $a\in \C^*$, respectively. 

Our aim is to contribute to the study of the parallelism between the 
value distribution of the sequence of higher derivatives (or jets) of 
the iterations of $f$ and the value distribution of 
higher derivatives (or jets) of meromorphic mappings
(cf.\ \cite{Yamanoi13}), extending the results mentioned above
to several different settings;
higher derivatives of polynomials over various valued fields and  
H\'enon-type polynomial automorphisms of $\bC^2$.

\subsection{Over the field $\bC$ of complex numbers}
Let $f\in\bC[z]$ be a polynomial of degree $d>1$. 
For every $h\in\bC[z]$ and every $m\in\bN$, we write 
the $m$-th order derivative $\frac{\rd^m}{\rd z^m}h(z)$ of $h$ as $h^{(m)}$.

Our first principal result is the following. 

\begin{mainth}\label{th:anyorder}
Let $f\in\bC[z]$ be a polynomial of degree $d>1$, 
and $m\in\bN$. Then for every $a\in\bC\setminus\{0\}$,
\begin{gather}
 \lim_{n\to\infty}\frac{\bigl((f^n)^{(m)}\bigr)^*\delta_a}{d^n-m}=\mu_f\quad\text{weakly on }\bP^1.\label{eq:equidistderivative}
\end{gather}
\end{mainth}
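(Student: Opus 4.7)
The plan is to follow the classical Brolin scheme. I introduce the normalized subharmonic potentials
$$u_n(z) := \frac{1}{d^n - m}\log\bigl|(f^n)^{(m)}(z) - a\bigr|$$
on $\bC$. Since $(f^n)^{(m)}(z) - a$ is a polynomial of degree $d^n - m$, the Laplacian $\Delta u_n$ equals the probability measure $\nu_n := \bigl((f^n)^{(m)}\bigr)^*\delta_a/(d^n - m)$, and \eqref{eq:equidistderivative} is the assertion $\nu_n \to \mu_f$ weakly on $\bP^1$. The proof splits into three steps: (i) uniform control of $\supp\nu_n$ together with a locally uniform upper envelope for $\{u_n\}$; (ii) pointwise convergence $u_n \to g_f$ on $\bC \setminus K(f)$; (iii) identification of every subsequential weak limit as $\mu_f$ via potential-theoretic uniqueness.

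For (i), the leading coefficient $c_n$ of $(f^n)^{(m)}$ satisfies $\log|c_n|/(d^n - m) \to \gamma(f) := -\log\operatorname{cap}(K(f))$, so the leading monomial of $(f^n)^{(m)} - a$ dominates outside some fixed large disk, confining $\supp\nu_n$ uniformly in $n$. The Cauchy estimate $|(f^n)^{(m)}(z)| \le m!\, r^{-m}\sup_{|w-z|=r}|f^n(w)|$ combined with \eqref{eq:Greeniterate} yields $\limsup_n u_n \le g_f$ pointwise and locally uniform upper bounds for $\{u_n\}$.

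For (ii), the heart of the argument, given $z \in \bC\setminus K(f)$ I pick an iterate index $k$ with $f^k(z)$ well inside the domain of the B\"ottcher coordinate $\phi$ of $f$ near $\infty$ (where $\log|\phi| = g_f$) and apply Fa\`a di Bruno's formula to $f^n = f^{n-k}\circ f^k$, writing $(f^n)^{(m)}(z)$ as a sum over set partitions $\pi$ of $\{1,\ldots,m\}$ of terms $(f^{n-k})^{(|\pi|)}(f^k(z))\prod_{B\in\pi}(f^k)^{(|B|)}(z)$. The leading-term asymptotics of the polynomial $(f^{n-k})^{(j)}$ at a point of fixed modulus $|f^k(z)|$ show that a term with $|\pi| = j < m$ is smaller than the all-singletons term ($|\pi| = m$) by a factor of order $\bigl(|f^k(z)|/d^{n-k}\bigr)^{m-j} \to 0$ as $n\to\infty$ with $k,z$ fixed. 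Hence $(f^n)^{(m)}(z) \sim (f^{n-k})^{(m)}(f^k(z))\cdot ((f^k)'(z))^m$; dividing $\log$ by $d^n - m$ and subsequently letting $k\to\infty$ produces the limit $g_f(z)$. The hypothesis $a\neq 0$ enters only through the asymptotic $|(f^n)^{(m)}(z) - a| \sim |(f^n)^{(m)}(z)|$ on $\bC\setminus K(f)$.

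For (iii), the upper bound from (i) renders $\{u_n\}$ precompact in $L^1_{\mathrm{loc}}$, and any cluster value $u_*$ is subharmonic with $u_* = g_f$ on $\bC\setminus K(f)$ by (ii); correspondingly, along a subsequence $\nu_{n_k}\to \nu_* := \Delta u_*$ weakly, with $\supp\nu_* \subset K(f)$. The exact identity $u_n(z) = \int\log|z - w|\,d\nu_n(w) + \log|c_n|/(d^n - m)$ passes to the limit as $u_*(z) = U_{\nu_*}(z) + \gamma(f)$; combined with the Frostman identity $g_f = U_{\mu_f} + \gamma(f)$ on $\bC\setminus K(f)$, this gives $U_{\nu_*} = U_{\mu_f}$ off $K(f)$, which by the uniqueness of the logarithmic potential for probability measures on a non-polar compact forces $\nu_* = \mu_f$. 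The main obstacle I expect is establishing the dominance in step (ii) with sufficient uniformity, requiring careful bookkeeping on all Fa\`a di Bruno terms and a proper ordering of the limits $n\to\infty$ and $k\to\infty$; the rest is a routine application of Brolin-type potential theory.
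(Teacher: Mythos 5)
Your step (iii) contains a fatal gap: the uniqueness claim is false. Two probability measures supported on a non-polar compact set $K$ can have identical logarithmic potentials on $\bC\setminus K$ without being equal. Take $f(z)=z^d$, so $K(f)=\overline{\bD}$ and $\mu_f$ is normalized arc length on $\partial\bD$; the Dirac mass $\delta_0$ satisfies $U_{\delta_0}(z)=\log|z|=U_{\mu_f}(z)$ for $|z|>1$, yet $\delta_0\neq\mu_f$. This is not an artificial counterexample: for this $f$ with $a=0$, one has $(f^n)^{(m)}(z)=\frac{(d^n)!}{(d^n-m)!}\,z^{d^n-m}$, so $\nu_n=\delta_0$ for every $n$ and $\nu_n\not\to\mu_f$. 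Since nothing in your step (iii) uses $a\neq 0$, your argument would ``prove'' the (false) conclusion in this very case. The hypothesis $a\neq 0$ cannot enter only through the asymptotic $|(f^n)^{(m)}-a|\sim|(f^n)^{(m)}|$ on $\bC\setminus K(f)$: that asymptotic holds for $a=0$ as well wherever $(f^n)^{(m)}\to\infty$.

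What must actually be proved is that every $L^1_{\mathrm{loc}}$ subsequential limit $\phi$ of the potentials $u_n$ equals $g_f$ on \emph{all} of $\bC$, not merely off $K(f)$, and it is precisely there that $a\neq 0$ is needed and an analysis inside $K(f)$ is unavoidable. In the paper's argument one shows that if $\{\phi<g_f\}\neq\emptyset$, it is contained in a Fatou component $U$ of the interior of $K(f)$; a Hartogs-type estimate forces any locally uniform limit $g$ of a subsequence of $(f^{n_j})$ on $U$ to satisfy $g^{(m)}\equiv a$; if $g$ is constant this immediately yields $a=0$, while if $g$ is non-constant one is driven into a Siegel disk, and a detailed analysis of the linearizing coordinate (via iterated Cauchy estimates and the a priori bound $\limsup_j\log|(f^{n_j})^{(m)}-a|/(d^{n_j}-m)<0$) produces a contradiction, so $g$ must be constant and $a=0$. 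Your proposal collapses this entire argument into a false potential-theoretic uniqueness lemma. As a side note, the Fa\`a di Bruno decomposition in your step (ii) is a genuinely different route from the paper's inductive B\"ottcher-coordinate estimate and could plausibly be made to work with care; but step (ii) is the comparatively routine part of the theorem, and the real difficulty — the behaviour inside $K(f)$, where the Siegel disk analysis lives — is not addressed by your scheme.
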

In Theorem \ref{th:anyorder}, the values $a=0,\infty$ 
need to be excluded as for every 
$n\in\bN$, $((f^n)^{(m)})^*\delta_\infty/(d^n-m)=\delta_\infty\neq\mu_f$ and, if $b\in E(f)\cap\bC$, then for 
every $n\in\bN$, $((f^n)^{(m)})^*\delta_0/(d^n-m)=\delta_b\neq\mu_f$. 
An affine coordinate on $\bC$ is fixed in Theorem \ref{th:anyorder},
but note that
$A^*(((f^n)^{(m)})^*\delta_a-(d^n-m)\cdot\mu_f)=
(((A\circ f\circ A^{-1})^n)^{(m)})^*\delta_{(A')^{m-1}(a)}-(d^n-m)\cdot\mu_{A\circ f\circ A^{-1}}$
on $\bP^1$ for any affine transformation $A$ on $\bC$. 

The equidistribution \eqref{eq:equidistderivative}
for $m>1$ was expected in \cite[\S2.4]{Gauthier_Vigny_derivative}, 
at least when $f$ has no Siegel disks. 
As seen in the proof below, \eqref{eq:equidistderivative}
follows only by an analysis of $(f^n)^{(m)}$ 
on $\bP^1\setminus K(f)$ in this case. This analysis
is not difficult for $m=1$ by the chain rule, but for $m>1$ 
it requires to deal carefully with the higher order derivatives of the B\"ottcher coordinates of $f$ near $\infty$. An extra and more involved
effort is required to treat the situation on $K(f)$ under the presence of Siegel disks of $f$ in general.

\subsection{Over a non-archimedean complete valued field $K$}\label{sec:nonarchi}
Let $K$ be an algebraically closed field. We say that an absolute value $|\cdot|$ on $K$ is \emph{non-trivial} if
$|K|\not\subset\{0,1\}$ and that it is \emph{non-archimedean} if  
the {\em strong triangle inequality} 
$|z+w|\le\max\{|z|,|w|\}$ holds for any $z,w\in K$. 
For the details on the Berkovich projective line $\sP^1=\sP^1(K)$,
the canonical action of $f$ on $\sP^1$,
and the equilibrium (or canonical) measure $\mu_f$ of $f$ on $\sP^1$,
see Subsection \ref{sec:backgroundnonarchi} below.
By convention, we say $f$ {\em has no potentially good reductions}
if $\mu_f(\{\cS\})=0$ for any $\cS\in\sP^1\setminus\bP^1;$ this definition
coincides with the usual algebraic one $($cf.\ \cite[Corollary 10.33]{BR10}$)$. 

Our second principal result is a non-archimedean counterpart of Theorem \ref{th:anyorder}. 

\begin{mainth}\label{th:derivative}
Let $K$ be an algebraically closed field of characteristic $0$
that is complete with respect to a non-trivial and non-archimedean
absolute value. Let $m\in\bN$ and $f\in K[z]$ be a polynomial of degree $d>1$
having no potentially good reductions. Then for every $a\in K$, 
\begin{gather}
 \lim_{n\to\infty}\frac{\bigl((f^n)^{(m)}\bigr)^*\delta_a}{d^n-m}=\mu_f
\quad\text{weakly on }\sP^1.\label{eq:derivatives}
\end{gather}
\end{mainth}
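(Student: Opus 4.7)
The strategy adapts the proof of Theorem~\ref{th:anyorder} to the Berkovich setting. Since $(f^n)^{(m)}$ is a polynomial of degree $d^n - m$, the non-archimedean Poincar\'e--Lelong formula on $\sP^1$
\begin{equation*}
\bigl((f^n)^{(m)}\bigr)^*\delta_a - (d^n - m)\,\delta_\infty = \Delta\log\bigl|(f^n)^{(m)} - a\bigr|,
\end{equation*}
combined with $\Delta g_f = \mu_f - \delta_\infty$, reduces \eqref{eq:derivatives} to the $L^1_{\mathrm{loc}}$ convergence of the subharmonic potentials
\begin{equation*}
u_n(z) := \frac{1}{d^n - m}\log\bigl|(f^n)^{(m)}(z) - a\bigr| \;\longrightarrow\; g_f(z)
\end{equation*}
on $\sP^1$. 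The leading coefficient of $(f^n)^{(m)}$ differs from that of $f^n$ by the integer $d^n(d^n{-}1)\cdots(d^n{-}m{+}1)$; in characteristic $0$ its $|\cdot|$-logarithm is $O(n)$ and hence negligible after division by $d^n - m$, so the Robin constants of $u_n$ and $g_f$ at $\infty$ agree in the limit.

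The crucial step is to establish $u_n\to g_f$ locally uniformly on a non-empty open subset of the Berkovich basin of $\infty$. Under the no-potentially-good-reduction hypothesis, the non-archimedean B\"ottcher coordinate $\varphi_f$ at $\infty$ exists on a Berkovich neighborhood $V$ of $\infty$, with $g_f = \log|\varphi_f|$ there, and conjugates $f|_V$ to $w\mapsto w^d$. Via the identity $f^n|_V = \varphi_f^{-1}\!\circ\!(w\mapsto w^{d^n})\!\circ\!\varphi_f$ and a Fa\`a di Bruno expansion, a term-by-term analysis of partition contributions together with the strong triangle inequality shows that the singleton-partition term—whose magnitude is asymptotic to $|\varphi_f'(z)|^m\cdot|d^n(d^n-1)\cdots(d^n-m+1)|\cdot e^{(d^n-m)g_f(z)}$—dominates by a factor at least $e^{2 g_f(z)}$ over all others, since the higher derivatives $\varphi_f^{(k)}$ for $k\ge 2$ decay at $\infty$ like $|z|^{-(k+1)}$. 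This yields
\begin{equation*}
\log|(f^n)^{(m)}(z)| = (d^n - m)\,g_f(z) + O(n)\quad(n\to\infty)
\end{equation*}
uniformly on each $\{g_f\ge\varepsilon\}\cap V$. Since $|a|$ stays bounded while $|(f^n)^{(m)}(z)|\to\infty$ on $V$, subtracting $a$ is harmless by a further application of the strong triangle inequality, giving $u_n\to g_f$ locally uniformly on $V$.

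To globalize the convergence to $L^1_{\mathrm{loc}}(\sP^1)$, the above normalization at $\infty$ shows that $(u_n)$ is uniformly bounded above on $\sP^1$, hence relatively compact. Any subsequential limit $u$ is subharmonic, bounded above by $g_f$, and equal to $g_f$ on $V$; therefore $u - g_f$ vanishes on a neighborhood of $\infty$ inside the connected open set $\sP^1\setminus K(f)$, and analytic continuation of harmonic functions forces $u = g_f$ on all of $\sP^1\setminus K(f)$. The identification extends to $K(f)$ via upper semicontinuity and the characterization of $g_f$ as the Green function of the non-polar compact $K(f)$ (cf.\ \cite{BR10}). Applying $\Delta$ then yields \eqref{eq:derivatives}. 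The value $a=0$ is admissible here, unlike in Theorem~\ref{th:anyorder}, because no potentially good reduction rules out a totally invariant finite critical point $b\in K$: such a $b$ would force $f$ to be conjugate to $c(z-b)^d + b$, which reduces well. The main obstacle is the Fa\`a di Bruno bookkeeping for $m\ge 2$, namely verifying that the singleton-partition term strictly dominates all others uniformly on each $\{g_f\ge\varepsilon\}\cap V$, which requires careful control of the decay of the higher derivatives of the non-archimedean B\"ottcher coordinate $\varphi_f$ and its inverse at $\infty$.
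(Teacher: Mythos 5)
Your overall architecture (Poincar\'e--Lelong, subsequential limits of subharmonic potentials, B\"ottcher coordinate near $\infty$) matches the paper, and your analysis on the basin of $\infty$ is essentially what the paper's Lemma~\ref{th:basinnonarchi} achieves by an inductive argument with non-archimedean Cauchy estimates (Fa\`a di Bruno would also work there). But there is a genuine gap at the final step, and it is exactly the hard part of the theorem.

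You claim that once a subsequential limit $u$ agrees with $g_f$ on $\sP^1\setminus\sK(f)$, ``the identification extends to $K(f)$ via upper semicontinuity and the characterization of $g_f$ as the Green function of the non-polar compact $K(f)$.'' This does not follow. What the maximum principle gives is $u\le g_f=0$ on $\sK(f)$, and $u\equiv 0$ on $\sJ(f)=\partial\sK(f)$; but nothing so far rules out $u<0$ on an open Berkovich Fatou component $U$ contained in the \emph{interior} of $\sK(f)$. The resulting $\Delta u + \mu_f$ would still be a perfectly good probability measure, so there is no automatic contradiction. This is precisely the same obstacle as the Siegel-disk case over $\bC$, and it is where the substantive work has to happen. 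The paper handles it by noting that such a $U$ has a single boundary point $\cS_0\in\sP^1\setminus\bP^1$ with $\phi(\cS_0)=0$, invoking the no-potentially-good-reduction hypothesis to get $\mu_f(\{\cS_0\})=0$, and then running a Bedford--Taylor-type \emph{domination principle} on the glued function $\psi$ (equal to $\phi$ on $U$ and $0$ elsewhere) to force $\psi\equiv 0$. Your proof contains no substitute for this argument.

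Relatedly, you have misidentified the role of the no-potentially-good-reduction hypothesis. You use it only to argue that $a=0$ is admissible (by ruling out a totally invariant point $b\in K$). That observation is correct as far as it goes, but it is a side benefit: the hypothesis is actually needed for \emph{every} $a\in K$, because without it the measure $\mu_f$ could charge the boundary point $\cS_0$ of a Fatou component inside $\sK(f)$, and the domination-principle step would fail. As written, your proof would purport to establish the conclusion for arbitrary $f$ of degree $>1$, which is false (take $f(z)=z^d$ and any $a\in K$: then $\mu_f=\delta_{\cS_{\can}}$, while $((f^n)^{(m)})^*\delta_a/(d^n-m)$ does not converge to $\delta_{\cS_{\can}}$ in general). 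So the step from $\sP^1\setminus\sK(f)$ to $\sK(f)$ needs to be supplied, and the hypothesis needs to be used where it actually bites.
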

The no potentially good reductions 
assumption allows us to deal with the Berkovich filled-in Julia set
$\sK(f)$ of $f$. The analysis on $\sP^1\setminus\sK(f)$ in the proof
is similar to that in the archimedean case, 
using  the (non-archimedean) B\"ottcher coordinate near $\infty$ and a
non-archimedean potential theory instead
(see \cite{Okuapripri}). 

\subsection{Over a product formula field $k$}\label{sec:adelic}
Let $k$ be a field. We denote by $\overline{k}$ an algebraic closure of $k$.
An effective $k$-divisor $\cZ$ on $\bP^1(\overline{k})$
is the scheme theoretic vanishing of some $P\in\bigcup_{d\in\bN}k[z_0,z_1]_d$.
Then, $\cZ$ is supported by $\overline{k}$ (regarding $\bP^1(\overline{k})$ 
as $\overline{k}\cup\{\infty\}$) if and only if $P(z_0,z_1)=z_0^{\deg p}p(z_1/z_0)$
for some $p(z)\in k[z]$ of degree $>0$ (identifying $[z_0:z_1]$ with $z_1/z_0$
when $z_0\neq 0$, that is, $\infty=[0:1]$ 
as the convention in \cite{FvdP04}), which is unique up to multiplication
in $k^*=k\setminus\{0\}$ and is called a {\em representative} of $\cZ$.

A field $k$ is a {\em product formula field} if $k$ is equipped with
a (possibly uncountable) family $M_k$ of (not necessarily all)
places of $k$, a family $(|\cdot|_v)_{v\in M_k}$ of 
non-trivial absolute values $|\cdot|_v$ representing $v$,
and a family $(N_v)_{v\in M_k}$ in $\bN$ satisfying the 
{\em product formula property} in that, for every $z\in k^*$,
\begin{center}
 $|z|_v=1$ for all but finitely many $v\in M_k$, and
 $\prod_{v\in M_k}|z|^{N_\nu}_v=1$.
\end{center}
A place $v\in M_k$ is said to be finite (resp.\ infinite)
if $|\cdot|_v$ is non-archimedean (resp.\ archimedean).
If $M_k$ contains an infinite place of $v$, then $k$ is (isomorphic to) a
number field (so there are at most finitely many infinite places of 
a product formula field). 
For each $v\in M_k$, let $k_v$ be the completion of $k$ with respect to
$|\cdot|_v$. Then $|\cdot|_v$ extends to $\overline{k_v}$.
Let $\bC_v$ be the completion of $\overline{k_v}$
with respect to $|\cdot|_v$ 
(so $|\cdot|_v$ extends to $\bC_v$) and fix an embedding of 
$\overline{k}$ to $\bC_v$ extending that of $k$ to $k_v$. 
By convention, the dependence of a 
{\em local} quantity induced by $|\cdot|_v$ on each $v\in M_k$ is 
emphasized by adding the suffix to it, like $k_v$ and $\bC_v$. 

Let $\hat{h}_f(\cZ)$ be the {\em Call-Silverman canonical height}
of an effective $k$-divisor $\cZ$ on $\bP^1(\overline{k})$
(see Subsection \ref{sec:height} below for the definition).
The following is our third principal result.

\begin{mainth}\label{th:adelicequidist}
Let $k$ be a product formula field of characteristic $0$, and
let $f\in k(z)$ be a polynomial of degree $d>1$ and $m\in\bN$. 
Then for every $a\in k$, denoting by $[(f^n)^{(m)}=a]$
the effective $k$-divisor on $\bP^1(\overline{k})$ 
whose representative is 
$(f^n)^{(m)}-a\in k[z]$,
we have
the {\em $(g_{f,v})_{v\in M_k}$-small heights property}
\begin{gather}
 \lim_{n\to\infty}\hat{h}_f([(f^n)^{(m)}=a])=0\label{eq:heightvanishing}
\end{gather}
of the sequence $([(f^n)^{(m)}=a])_n$ of effective $k$-divisors 
on $\bP^1(\overline{k})$.

Assume, in addition, that $k$ is a number field and 
$a\in k^*$, then the uniform 
{\em asymptotically $(g_{f,v})_{v\in M_k}$-Fekete configuration property}
\begin{multline}
\lim_{n\to\infty}\sup_{v\in M_k}N_v
\int_{\sP^1 (\bC_v)\times\sP^1 (\bC_v)\setminus\diag_{\bP^1 (\bC_v)}}
(\log|\cS-\cS'|_v-g_{f,v}(\cS)-g_{f,v}(\cS'))\\
\biggl(\Bigl(\frac{\bigl((f^n)^{(m)}\bigr)^*\delta_a}{d^n-m}-\mu_{f,v}\Bigr)
\times\Bigl(\frac{\bigl((f^n)^{(m)}\bigr)^*\delta_a}{d^n-m}-\mu_{f,v}\Bigr)\biggr)(\cS,\cS')=0\label{eq:Fekete}
\end{multline}
of $([(f^n)^{(m)}=a])$ holds, so in particular, for every $v\in M_k$,
\begin{gather}
 \lim_{n\to\infty}\frac{\bigl((f^n)^{(m)}\bigr)^*\delta_a}{d^n-m}
=\mu_{f,v}\quad\text{weakly on }\sP^1(\bC_v).\label{eq:adelicequidist}
\end{gather}
\end{mainth}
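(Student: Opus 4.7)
The plan is to combine Theorems \ref{th:anyorder} and \ref{th:derivative} with the standard adelic formalism of the canonical height. Writing $D_n:=d^n-m$, $\cZ_n:=[(f^n)^{(m)}=a]$, and $\nu_{n,v}:=\bigl((f^n)^{(m)}\bigr)^*\delta_a/D_n$, the decomposition of $\hat{h}_f$ into local Green potentials yields the key identity
\begin{gather*}
\hat{h}_f(\cZ_n)=\sum_{v\in M_k}N_v\int g_{f,v}\,d\nu_{n,v},
\end{gather*}
obtained from $\hat{h}_f(\alpha)=\sum_v N_v\,g_{f,v}(\alpha)$ for $\alpha\in\overline{k}$ by averaging over the roots (with multiplicity) of $(f^n)^{(m)}-a$; since $g_{f,v}\ge 0$, each summand is non-negative.

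To establish \eqref{eq:heightvanishing}, I would show that only finitely many local contributions are non-trivial and that each tends to $0$. There exists a finite set $S\subset M_k$ of "bad" places---containing all archimedean places, the finite places of bad reduction for $f$, and finitely many others at which $|c_f|_v$, $|a|_v$, or $|m!\binom{d^n}{m}|_v$ fails to equal $1$---outside of which a $v$-adic Newton-polygon / escape-radius argument forces every zero of $(f^n)^{(m)}-a$ into the closed unit disk, where $g_{f,v}\equiv 0$. At each $v\in S$, an iterative escape estimate supplies a radius $R_v$ (independent of $n$) bounding all such zeros; on the compact $\{|z|_v\le R_v\}$ the function $g_{f,v}$ is continuous and bounded. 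Applying Theorem \ref{th:anyorder} if $v$ is archimedean, or Theorem \ref{th:derivative} if $v$ is non-archimedean (with a direct conjugation-to-good-reduction argument at the non-archimedean places of potentially good reduction, where $\mu_{f,v}$ is a Dirac at a type II point), one obtains $\nu_{n,v}\to\mu_{f,v}$ weakly on $\sP^1(\bC_v)$; since $\supp\mu_{f,v}\subset\sK(f)$ and $g_{f,v}\equiv 0$ on $\sK(f)$, we conclude $\int g_{f,v}\,d\nu_{n,v}\to\int g_{f,v}\,d\mu_{f,v}=0$. Summing over the finitely many $v\in S$ yields \eqref{eq:heightvanishing}.

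Under the additional hypotheses that $k$ is a number field and $a\in k^*$, the uniform asymptotic Fekete configuration property \eqref{eq:Fekete} is extracted from \eqref{eq:heightvanishing} via the arithmetic adelic equidistribution formalism (Baker--Rumely, Favre--Rivera-Letelier, Yuan) applied to the adelic-psh datum $(g_{f,v})_{v\in M_k}$, once the "small diagonal" property for $(\cZ_n)$ is verified: multiple zeros of $(f^n)^{(m)}-a$ are also zeros of $(f^n)^{(m+1)}$, hence their multiplicities are uniformly bounded while their number is at most $d^n-m-1$---negligibly small compared to $D_n^2$ in the diagonal integral. The place-by-place equidistribution \eqref{eq:adelicequidist} is then an immediate consequence of \eqref{eq:Fekete} applied to continuous test functions.

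The main obstacle is the treatment of non-archimedean places of potentially good (but not strictly good) reduction, where Theorem \ref{th:derivative} is inapplicable and a direct argument (via $\overline{k_v}$-conjugation to a good-reduction model of $f$) must be supplied in order to establish the local convergence $\nu_{n,v}\to\mu_{f,v}$. A secondary delicate point is that the adelic decomposition of $\hat{h}_f$ relies on the product-formula cancellation $\sum_v N_v\gamma_{f,v}=0$ of the local Robin constants $\gamma_{f,v}=\log|c_f|_v/(d-1)$ and on the corresponding cancellation $\sum_v N_v\log|c_{Q_n}|_v=0$ for the leading coefficient of $(f^n)^{(m)}-a\in k[z]$, which together must be set up carefully so that only the intrinsic $\int\log|\cdot|_v$ against $\mu_{f,v}$ survives in the final identity for $\hat{h}_f(\cZ_n)$.
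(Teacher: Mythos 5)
Your plan correctly identifies the global decomposition of $\hat{h}_f$ (equivalent to the paper's Mahler-type formula (3.5$'$)) and the finiteness of the set of places that can contribute, but the two essential local steps are carried out differently in the paper, and your versions contain genuine gaps.

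First, for the small heights property \eqref{eq:heightvanishing} at the finitely many bad non-archimedean places: you propose to deduce $\int g_{f,v}\,d\nu_{n,v}\to 0$ from the weak convergence $\nu_{n,v}\to\mu_{f,v}$, which at a finite place requires Theorem \ref{th:derivative}, which in turn requires that $f$ has no potentially good reduction at $v$. You flag this gap yourself but offer only a promissory ``conjugation-to-good-reduction argument,'' which cannot be made to work as stated: at a place of potentially good reduction $\mu_{f,v}$ is a Dirac mass at a type II point, the discrete empirical measures $\nu_{n,v}$ need not converge weakly to it, and in fact the paper's Theorem \ref{th:derivative} explicitly excludes this case precisely because the equidistribution is not established there. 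The paper avoids this obstruction entirely: instead of equidistribution, it proves the weaker and unconditional estimate \eqref{eq:localheight}, namely $\limsup_{n}\,(d^n-m)^{-1}\int_{\sP^1}\log|(f^n)^{(m)}-a|\,\rd\mu_{f,v}\le 0$, which follows from the compactness principle for $\delta_{\cS_{\can}}$-subharmonic functions and the inequality $\phi+g_f\le 0$ on $\sK(f)$; this argument does not need the no-potentially-good-reduction hypothesis. Combined with the uniform upper bound over $v$ and $n$ coming from the second assertions in Lemmas \ref{th:basin} and \ref{th:basinnonarchi} together with the fact that $\mu_{f,v}=\delta_{\cS_{\can,v}}$ for $v\notin E_f\cup E_a$, Fatou's lemma then gives \eqref{eq:heightvanishing}. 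Your framework must be rewired to use this local a priori estimate rather than local equidistribution.

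Second, for the small diagonal property: the claim that ``multiple zeros of $(f^n)^{(m)}-a$ have uniformly bounded multiplicities, and their number is at most $d^n-m-1$'' is not a valid proof. The second clause only bounds $\sum_i(\mu_i-1)$, and neither clause prevents a single zero from having multiplicity comparable to $d^n-m$, which would ruin the estimate $\sum_i\mu_i^2=o((d^n-m)^2)$. The paper instead \emph{uses} the archimedean equidistribution \eqref{eq:equidistderivative} (valid since $k$ is a number field, so it has an infinite place, and $a\in k^*$): since the weak limit $\mu_{f,v}$ at an archimedean $v$ has no atoms, the maximal multiplicity is $o(d^n-m)$, which gives the small diagonal. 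You should adopt this argument; the remainder of your Fekete step (citing the adelic equidistribution formalism, here \cite[Theorem 1]{okuadelically}) is correct and matches the paper.

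Finally, a minor point: even where weak convergence is available, passing from $\nu_{n,v}\to\mu_{f,v}$ to $\int g_{f,v}\,d\nu_{n,v}\to 0$ requires confining $\supp\nu_{n,v}$ to a fixed compact where $g_{f,v}$ is bounded (you allude to an escape-radius argument, which is fine but should be stated); the paper's route via \eqref{eq:localheight} and \eqref{eq:Mahler} sidesteps this subtlety as well.
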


The proof is based on an adelic equidistribution result 
for effective divisors on $\bP^1(\overline{k})$ 
having {\em small diagonals and} small heights (\cite{OkuDivisor}).

\subsection{The derivatives of the iterates of a H\'enon-type polynomial
automorphism of $\C^2$  }

Let $[t:z:w]$ be the homogeneous coordinate on $\P^2$,
endowed with the Fubini-Study form. 
Identifying $\C^2$ with $\{t=1\}$, we let
\begin{gather*}
 L_\infty:=\{t=0\}=\bP^2\setminus\bC^2 
\end{gather*}
be the {\em line at infinity} in $\bP^2$. We fix 
the orthonormal frame $(\partial_z,\partial_w)$ of the tangent space $T\bC^2$ of 
$\bC^2$,
so that for a polynomial endomorphism $f$ of $\bC^2$,
the derivative $\rd f$ of $f$ is identified 
with the $\mathrm{M}(2,\bC)$-valued function $(z,w)\mapsto(Df)_{(z,w)}$.
Here, a polynomial automorphism of $\bC^2$ is a polynomial endomorphism of $\bC^2$
whose inverse exists and is a polynomial endomorphism of $\bC^2$.

Recall some basic facts on a {\em H\'enon-type} 
polynomial automorphism $f$ of $\C^2$ 
of degree $d>1$ (\cite{Bed_Smi_critical,DS}).
The Jacobian determinant $J_f:=\det(Df)\in\bC[z,w]$ of $f$ 
is a non-zero constant on $\bC^2$, so for every $n\in\bN$, 
the Jacobian determinant $J_{f^n}=\det(D(f^n))\in\bC[z,w]$ of $f^n$
on $\bC^2$ is equal to the non-zero constant $J_{f}^n$. 
This $f$ extends to a birational self-map on $\P^2$, which is still 
denoted by $f$ for simplicity, so that both the indeterminacy loci $I^+,I^-$ 
of $f,f^{-1}$ are singletons in $L_\infty$, that
$I^-\neq I^+$ 
(so often normalized as $I^+=\{[0:0:1]\}, I^-=\{[0:1:0]\}$), 
and that $I^-=f(L_\infty\setminus I^+)$. 
Moreover, the unique point in $I^-$ is a superattracting fixed point 
of $f|(\bP^1\setminus I^+)$, 
and the attractive basin $B^+$ of $f|(\bP^1\setminus I^+)$ associate to $I^-$
satisfies $B^+\setminus\bC^2=L_{\infty}\setminus  I^+$.
Let $\|\cdot\|$ be the Euclidean norm on $\bC^2$. 
The {\em filled-in Julia set} of $f$ is defined by
\begin{gather*}
K^+:=\Bigl\{(z,w)\in \C^2: \limsup_{n \to \infty}\|f^n(z,w)\|<\infty\Bigr\}.
\end{gather*}
Then $\overline{K^+}= K^+\cup I^+$ in $\P^2$ and $\P^2=\overline{K^+}\cup B^+$
(see e.g.\ \cite[Proposition 5.5]{DS}).
The {\em Green function} $g^+$ of $f$ is the locally uniform limit
\begin{equation*}
g^+:= \lim_{n \to \infty}\frac{\log\max\{1,\|f^n\|\}}{d^n}\quad\text{on }\bC^2. 
\end{equation*} 
It is continuous and plurisubharmonic on $\C^2$, it is $>0$ and pluriharmonic 
on $B^+$, and it is $\equiv 0$ on $K^+$.
The {\em Green current} $T^+$ of $f$ is defined as the trivial extension of $\rd\rd^c g^+$ on $\bC^2$ to $\bP^2$. It is a positive closed $(1,1)$-current on $\P^2$ 
and moreover of mass $1$ (\cite[Lemma 6.3]{DS}).

For a non-constant polynomial $P\in\bC[z,w]$,
let $[P]$ be the current of integration along the hypersurface in $\bP^2$
defined by the zeros of (the homogenized) $P$ in $\bP^2$,
taking into account their multiplicities. The mass of $[P]$ 
equals $\deg P$ by B\'ezout's theorem. Let 
$I_2=\begin{pmatrix}
      1 & 0\\
      0 & 1
     \end{pmatrix}$ 
be the {\em identity matrix} in $\mathrm{M}(2,\bC)$. 

Our final principal result is the following.

\begin{mainth}\label{th:eigenvalue_automorphism}
 Let $f$ be a H\'enon-type polynomial automorphism of $\C^2$ of degree $d>1$ 
 and $\lambda \in \C^*=\bC\setminus\{0\}$. Then for every $n\in\bN$, 
 $\det(D(f^n)- \lambda I_2)\in \bC[z,w]$ is of degree $d^n-1$,
 and 
 \begin{gather}
  \lim_{n\to \infty}\frac{[\det(D(f^n)- \lambda I_2)]}{d^n-1}
  =T^+\quad\text{on }\bP^2\label{eq:equidistforward}
 \end{gather}
 as currents.
\end{mainth}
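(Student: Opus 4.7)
\emph{Reduction to a single polynomial and degree.} For any $2\times 2$ matrix $A$, $\det(A-\lambda I_2) = \lambda^2 - \lambda\operatorname{tr} A + \det A$. Applied to $A = D(f^n)(z,w)$ with $\det(D(f^n)) = J_f^n$ (a nonzero constant since $f$ is of H\'enon type) and $\lambda\neq 0$, this yields the factorization
$$\det(D(f^n)-\lambda I_2) = -\lambda\bigl(\operatorname{tr}(D(f^n))-c_n\bigr),\qquad c_n := \lambda + J_f^n/\lambda,$$
so the divisor and current of $\det(D(f^n)-\lambda I_2)$ coincide with those of $\operatorname{tr}(D(f^n))-c_n$. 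Using the Friedland--Milnor decomposition, I would reduce the degree computation to a generalized H\'enon map $f(z,w) = (w,p(w)-\delta z)$ with $p$ monic of degree $d$. Induction on $n$ via $f^n_2 = p(f^{n-1}_2) - \delta f^{n-1}_1$ gives $f^n_2 = w^{d^n} + (\text{lower in }w)$, hence $\partial_w f^n_2 = d^n w^{d^n-1} + (\text{lower})$, while $\partial_z f^n_1 = \partial_z f^{n-1}_2$ has degree $<d^n - 1$. So $\operatorname{tr}(D(f^n)) = \partial_z f^n_1 + \partial_w f^n_2$ has degree exactly $d^n - 1$ with leading monomial $d^n w^{d^n-1}$, and so does $\det(D(f^n)-\lambda I_2)$.

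\emph{Upper bound on potentials.} Set $u_n := \frac{1}{d^n-1}\log|\det(D(f^n)-\lambda I_2)|$, a quasi-psh function on $\bP^2$ in the hyperplane cohomology class. The plan is to show $u_n \to g^+$ in $L^1_{\mathrm{loc}}(\bC^2)$ so that $dd^c u_n = [\det(D(f^n)-\lambda I_2)]/(d^n-1)$ converges to $dd^c g^+ = T^+$ on $\bC^2$, and by mass conservation on $\bP^2$. Writing $D(f^n) = \prod_{k=0}^{n-1}Df(f^k)$ and using $\|Df(p)\|\leq C(1+\|p\|)^{d-1}$ (the entries of $Df$ are polynomials of degree $\leq d-1$) yields
$$\log\|D(f^n)\| \leq (d-1)\sum_{k=0}^{n-1}\log^+\|f^k\| + O(n).$$
Combining with $\log^+\|f^k\| = d^k g^+ + O(1)$ (the $\bC^2$-analog of \eqref{eq:Greeniterate}) and $\sum_{k=0}^{n-1}d^k = (d^n-1)/(d-1)$ gives $\log\|D(f^n)\| \leq (d^n-1)g^+ + O(n)$. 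Plugging this into $|\det(D(f^n)-\lambda I_2)| \leq |\lambda|^2 + 2|\lambda|\|D(f^n)\| + |J_f|^n$ (via $|\operatorname{tr}(D(f^n))|\leq 2\|D(f^n)\|$), taking logarithms and dividing by $d^n - 1$ produces
$$u_n \leq g^+ + o(1)\quad\text{uniformly on compact subsets of }\bC^2.$$

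\emph{Limit identification and conclusion.} By compactness of quasi-psh functions in a fixed cohomology class, any subsequence of $(u_n)$ admits a further $L^1(\bP^2)$-convergent subsequence with limit $u$ quasi-psh in the same class and $u \leq g^+$. The leading-monomial computation of the first paragraph pins down $u_n(z,w) = \log|w| + O(n)/(d^n-1)$ as $|w|\to\infty$ with $z$ bounded, matching the asymptotic $g^+(z,w) = \log|w| + O(1)$ of the H\'enon Green function at $L_\infty\setminus I^+$. Combined with $u\leq g^+$ and the extremality of $g^+$ within its cohomology class with this prescribed behavior at $L_\infty\setminus I^+$, one concludes $u = g^+$. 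Since the subsequential limit is unique, the full sequence converges, yielding $dd^c u_n\to T^+$, i.e., \eqref{eq:equidistforward}.

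\emph{Main obstacle.} The hard step is the identification $u = g^+$: the upper bound yields only $u\leq g^+$. Upgrading to equality calls for an extremality/uniqueness characterization of $g^+$ on $\bP^2$ with its prescribed asymptotic at $L_\infty\setminus I^+$, in the spirit of Bedford--Smillie's unique-invariant-current arguments; an alternative is a direct pointwise lower bound for $|\operatorname{tr}(D(f^n))|$ on $B^+$ derived from a H\'enon-type B\"ottcher coordinate $\varphi\circ f = \varphi^d$ defined near $I^-$, analogous to the analysis of $(f^n)^{(m)}$ near $\infty$ in the proof of Theorem~\ref{th:anyorder}. Ruling out spurious mass concentration at the indeterminacy point $I^+$ for the limit is an additional technical point.
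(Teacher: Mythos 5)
Your reduction via $\det(D(f^n)-\lambda I_2) = -\lambda\bigl(\operatorname{tr}(D(f^n))-\lambda-J_f^n/\lambda\bigr)$ is a clean observation (the paper instead works directly with $\det(D(f^n)-A)$ for a general $A$ with $a_4\neq 0$, expanded into the five partial derivatives of $P_n,Q_n$), and the upper bound $u_n\le g^+ + o(1)$ via the chain rule and $\|Df\| = O(\|\cdot\|^{d-1})$ is correct. But the "main obstacle" you flag is a genuine unfilled gap, and the hints you sketch for it do not close it. The leading-monomial computation only gives $u_n(z,w)=\log|w|+o(1)$ as $|w|\to\infty$, which matches $g^+$ to leading order but not exactly (the pluriharmonic correction $g^+-\log\max\{1,|w|\}$ near $L_\infty\setminus I^+$ is nonconstant), so you cannot conclude $u=g^+$ on an open set and then propagate by the maximum principle. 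And there is no off-the-shelf "extremality of $g^+$" that upgrades $u\le g^+$ with matching growth at $L_\infty\setminus I^+$ to $u=g^+$. (A minor additional point: Friedland--Milnor gives a composition of generalized H\'enon maps, not a single one, so the degree computation needs to run through compositions or, as in the paper, through the structure of $P_n,Q_n$ from Dinh--Sibony.)

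What is actually needed, and what the paper supplies, is an asymptotic matching $\phi_n = g^+ + O(nd^{-n})$ on a dense open subset of $B^+\cap\bC^2$, not merely near $L_\infty$ and not merely an upper bound. The paper derives this from precise asymptotics $\partial_j P_n = 2d^n P_n\,\partial_j g^+ + O(1)$ and $\partial_j Q_n = o(d^n P_n)$ on $B^+$ (proved by Poisson and Cauchy integral arguments applied to $g^+ - d^{-n}\log|P_n|$ and $Q_n/P_n$), and it must first excise the analytic hypersurface $Y=\{a_4\partial_z g^+ - a_3\partial_w g^+ = 0\}$ where the dominant term degenerates; a separate lemma shows $Y\subsetneq B^+\cap\bC^2$ and has no horizontal component, so it does not obstruct the local uniform bound on $\bC^2$. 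Only then does the subsequential limit $\phi$ equal $g^+$ on all of $B^+\cap\bC^2$, making $dd^c\phi$ supported in $K^+$; after taking the trivial extension $S$ across $L_\infty$, the uniqueness of $T^+$ among positive closed $(1,1)$-currents of mass $\le 1$ supported on $\overline{K^+}$ (Forn\ae ss--Sibony, Dinh--Sibony) gives $S=cT^+$, the constant is pinned to $1$ by testing against a line through $I^-$, and the comparison $\tilde S\ge S$ with equal masses rules out the residual mass at $I^+$ that you correctly worried about. Your route (b) is the right instinct, but carrying it out requires exactly this derivative asymptotics on $B^+$ plus the $Y$-excision, neither of which appears in your sketch.
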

In the proof, we show the $L^1_{\mathrm{loc}}$-convergence of 
a sequence of potentials of $[\det(D(f^n)- \lambda I_2)]/(d^n-1)$ 
towards $g^+$ on $B^+$ as $n\to\infty$ using the first order partial derivatives 
of $g^+$. The pleasant {\em uniqueness} of $T^+$ among
all positive closed $(1,1)$-currents on $\bP^2$ of mass $1$ 
which are supported by $\overline{K^+}$ (\cite{Forn_Sib}; see also \cite[Theorem 6.5]{DS}) allows us to deal with $K^+$.

\subsection*{Organization of the article}
In Section \ref{sec:overC}, we treat the field $\bC$ of complex numbers.
In Subsection \ref{sec:background}, we recall some notion and facts 
from complex dynamics. In Subsection \ref{sec:complex}, we give a proof of
Theorem~\ref{th:anyorder} and, in Subsection \ref{sec:firstsecond}, we give 
a simpler treatment for the cases $m=1,2$. In Section \ref{sec:overnotC}, 
we treat a non-archimedean field $K$ and a product formula field $k$.
In Subsections \ref{sec:backgroundnonarchi} and \ref{sec:height},  
we recall a background from non-archimedean and 
arithmetic dynamics, respectively, and 
in Subsection \ref{sec:adelic2}, we show Theorems \ref{th:derivative} and
\ref{th:adelicequidist}. 
In Section \ref{sec:henon}, we show Theorem \ref{th:eigenvalue_automorphism}
in a slightly more general form.

\section{Proof of Theorem \ref{th:anyorder}}\label{sec:overC}
\subsection{Background from complex dynamics}
\label{sec:background}
Let $f\in\bC[z]$ be a polynomial of degree $d>1$. The superattractive basin 
\begin{gather*}
 I_\infty(f):=\Bigl\{z\in\bP^1:\lim_{n\to\infty}f^n(z)=\infty\Bigr\}
\end{gather*}
of $f$ associated to the superattracting fixed point $\infty$ 
of $f$ (regarding $\bP^1$ as $\bC\cup\{\infty\}$)
is a domain in $\bP^1$ containing $\infty$, and coincides with $\bP^1\setminus K(f)$.
Let $C(f)$ be the critical set of $f$ (as a branched self-covering of $\bP^1$) 
which consists of $\infty$ and all the  zeros of $f'$ on $\bC$. The set
$\bigcup_{n\in\bN\cup\{0\}}f^{-n}(C(f)\setminus\{\infty\})$ is bounded in $\bC$.

The topology of $\bP^1$ coincides with the induced one
from the chordal metric on $\bP^1$.
The Julia set $J(f)$ of $f$ is defined as the set of all $z\in\bP^1$
at which the family $(f^n:\bP^1\to\bP^1)_{n\in\bN}$ is not normal. 
The Fatou set $F(f)$ of $f$ is defined by $\bP^1\setminus J(f)$
and a component of $F(f)$ is called a Fatou 
component of $f$. Both $J(f)$ and $F(f)$ are totally invariant
under $f$ and 
\begin{gather*}
 J(f)=\partial K(f)=\partial I_\infty(f). 
\end{gather*}
Any Fatou component of $f$ is either $I_\infty(f)$ or a component 
of the interior of $K(f)$ 
and is mapped properly to a Fatou component of $f$. Any Fatou component
of $f$ other than $I_\infty(f)$ is simply connected.
A Fatou component $W$ of $f$
is said to be \emph{cyclic} under $f$ if there is $p\in\bN$ such that $f^p(W)=W$.
If in addition the restriction $f^p:W\to W$ is injective, $W$ 
is called a {\em Siegel disk} of $f$ and then there exists a holomorphic
injection $h:W\to\bC$ such that for some $\alpha\in\bR\setminus\bQ$,
$h\circ f^p=e^{2i\pi\alpha}\cdot h$ on $W$. In particular, $h(V)=\{|w|<r\}$ for some $r>0$, $v_0:=h^{-1}(0)$ is fixed by $f^p$, and 
$(f^p)'(v_0)=e^{2i\pi\alpha}$. 
For more details on complex dynamics, see e.g.\ \cite{Milnor4}.

\subsection{Proof of Theorem \ref{th:anyorder}}\label{sec:complex} 

Let $f\in\bC[z]$ be a polynomial of degree $d>1$. Fix $m\in\bN$. 
\begin{lemma}\label{th:basin}
We have
\begin{gather}
 (f^n)^{(m)}=\bigl((e^{O(1)}\cdot d^n)^m+O(d^{(m-1)n})\bigr)\cdot f^n\quad\text{as }n\to\infty\label{eq:higherorderderivative}
\end{gather}
on $I_\infty(f)\setminus\bigcup_{n\in\bN\cup\{0\}}f^{-n}(C(f))$ locally uniformly.
Moreover, for every $a\in\bC$, the family
$\bigl((\log|(f^n)^{(m)}-a|)/(d^n-m)\bigr)_n$ 
of subharmonic functions on $\bC$
is locally uniformly bounded from above on $\bC$ and 
\begin{gather}
\lim_{n\to\infty}\frac{\log|(f^n)^{(m)}-a|}{d^n-m}=g_f\label{eq:aroundinfty}
\end{gather}
locally uniformly on 
$I_\infty(f)\setminus\bigcup_{n\in\bN\cup\{0\}}f^{-n}(C(f))$.
\end{lemma}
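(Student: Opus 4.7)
I would build the proof on the B\"ottcher coordinate $\varphi$ of $f$ at $\infty$: a univalent holomorphic function with $\varphi(z)=z(1+O(1/z))$ near $\infty$ and the functional equation $\varphi\circ f=\varphi^d$. By pulling back under iterates of $f$, $\varphi$ extends to a locally defined branch on $I_\infty(f)\setminus\bigcup_{n\in\bN\cup\{0\}}f^{-n}(C(f))$, where $|\varphi|=e^{g_f}$, and on any compact $K$ in this set both $\varphi$ and $\varphi'$ are bounded above and below away from $0$. Its local inverse $\psi:=\varphi^{-1}$ admits a Laurent expansion $\psi(w)=w+c_0+c_{-1}/w+\cdots$ at $\infty$, so $\psi'(w)=1+O(w^{-2})$ and $\psi^{(k)}(w)=O(w^{-(k+1)})$ for $k\ge 2$; moreover the functional equation yields $f^n=\psi\circ\varphi^{d^n}$ on $K$.

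\textbf{Proof of \eqref{eq:higherorderderivative}.} I would apply Fa\`a di Bruno to $(\psi\circ\varphi^{d^n})^{(m)}$. Since $|\varphi^{d^n}(z)|=e^{d^n g_f(z)}$ grows super-exponentially in $d^n$ on $K$, every Fa\`a di Bruno term involving $\psi^{(k)}$ with $k\ge 2$ decays by a factor $|\varphi|^{-2d^n}$ compared to the $k=1$ term $\psi'(\varphi^{d^n})\cdot(\varphi^{d^n})^{(m)}$, hence is negligible relative to any polynomial factor in $d^n$. A direct induction on $m$ gives $(\varphi^{d^n})^{(m)}=(d^n)^m\varphi^{d^n-m}(\varphi')^m+O(d^{(m-1)n})\varphi^{d^n-m}$ locally uniformly. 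Using $\varphi(f^n)=\varphi^{d^n}$ together with $\psi(w)\sim w$ to replace $\varphi^{d^n-m}$ by $f^n(1+o(1))/\varphi^m$, and $\psi'(\varphi^{d^n})=1+o(1)$, I obtain
\begin{equation*}
(f^n)^{(m)}=\bigl((d^n\,\varphi'/\varphi)^m(1+o(1))+O(d^{(m-1)n})\bigr)f^n,
\end{equation*}
which is \eqref{eq:higherorderderivative} because $\varphi'/\varphi$ is locally bounded and locally non-vanishing on the given set, thus of the multiplicative form $e^{O(1)}$.

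\textbf{Upper bound on $\bC$ and convergence on the basin.} For the global locally uniform upper bound of the subharmonic family, I would fix a compact $L\subset\bC$ and a slightly larger compact $L'\supset L$; Cauchy's integral formula gives $|(f^n)^{(m)}|\le C_{L,L'}\sup_{L'}|f^n|$ on $L$, and \eqref{eq:Greeniterate} yields $\log\sup_{L'}|f^n|\le d^n(\sup_{L'}g_f+o(1))$. The trivial bound $\log|(f^n)^{(m)}-a|\le\log 2+\max\{\log|(f^n)^{(m)}|,\log|a|\}$ then gives $(\log|(f^n)^{(m)}-a|)/(d^n-m)\le\sup_{L'}g_f+o(1)$ on $L$. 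For \eqref{eq:aroundinfty}: on a compact of $I_\infty(f)\setminus\bigcup_n f^{-n}(C(f))$ we have $|f^n|\ge e^{cd^n}$ for some $c>0$, so by \eqref{eq:higherorderderivative} the term $(d^n\varphi'/\varphi)^m f^n$ dominates $a$, whence $\log|(f^n)^{(m)}-a|=m\log d^n+\log|f^n|+O(1)$. Dividing by $d^n-m$ and invoking $\log|f^n|/d^n\to g_f$ from \eqref{eq:Greeniterate} produces the limit $g_f$, locally uniformly on the set in question.

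\textbf{Main obstacle.} The delicate step is clearly the Fa\`a di Bruno analysis of the first part: for arbitrary $m$ one must simultaneously isolate the single dominant contribution $\psi'(\varphi^{d^n})(\varphi^{d^n})^{(m)}$ with its leading $(d^n)^m$ coefficient, bound the combinatorial sum of all lower-order sub-partition contributions by $O(d^{(m-1)n})$, and exploit the super-exponential growth $|\varphi^{d^n}|=e^{d^n g_f}$ to suppress every contribution involving a higher derivative $\psi^{(k)}$ for $k\ge 2$.
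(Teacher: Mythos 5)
Your proposal is correct and arrives at the same conclusions, but by a genuinely different computational route from the paper's. Where you apply Fa\`a di Bruno directly to $f^n=\psi\circ\varphi^{d^n}$ after extending the B\"ottcher coordinate $\varphi$ locally to all of $I_\infty(f)\setminus\bigcup_{n\ge 0}f^{-n}(C(f))$ via pullback (choosing branches of $d^M$-th roots, valid since critical orbits are avoided), the paper keeps the B\"ottcher coordinate confined to $\{g_f>r\}$ and instead transports a compact $D$ forward by $f^M$ so that $f^M(D)\subset\{g_f>r\}$. It then proves the key estimate \eqref{eq:logarithmic} by induction on $m$, writing $\frac{(f^n)^{(m)}}{f^n}=\frac{(f^n)^{(m-1)}(f^n)'}{(f^n)^2}+\bigl(\frac{(f^n)^{(m-1)}}{f^n}\bigr)'$ and controlling the derivative term by Cauchy's estimate. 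Your Fa\`a di Bruno approach yields a more explicit formula for $(f^n)^{(m)}$ in one stroke, at the price of a combinatorial bookkeeping of partition terms (which you correctly flag as the delicate point: the $k\ge 2$ terms are killed by $\psi^{(k)}(w)=O(w^{-k-1})$ against $|\varphi^{d^n}|=e^{d^n g_f}$, and the $k=1$ term is $\psi'(\varphi^{d^n})\cdot(\varphi^{d^n})^{(m)}$ with leading coefficient $(d^n)^m$); the paper's route trades this for a clean one-step-at-a-time induction using only Cauchy's estimate. For the locally uniform upper bound on all of $\bC$ you also differ slightly: you bound $(f^n)^{(m)}$ directly by Cauchy's integral formula and \eqref{eq:Greeniterate}, while the paper deduces it from \eqref{eq:aroundinfty} on a large circle plus the maximum principle for subharmonic functions. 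Both are valid; your argument is arguably more elementary there. The remaining step, deriving \eqref{eq:aroundinfty} from \eqref{eq:higherorderderivative} and \eqref{eq:Greeniterate}, is essentially identical in both.
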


\begin{proof}
Fixing $r\gg 1$, there exists a biholomorphism
$w=\psi(z)$ from $\bP^1\setminus\{g_f\le r\}$
to $\bP^1\setminus\{|w|\le e^r\}$, 
which is called a {\em B\"ottcher coordinate}
near $\infty$ associated to $f$, such that 
$\psi(f(z))=\psi(z)^d$ on $\bP^1\setminus\{g_f\le r\}$. 
Then 
$\psi(\infty)=\infty$, $\psi'\neq 0$ on $\bC\setminus\{g_f\le r\}$,
and letting $\iota:\bP^1\to\bP^1$ be the involution $z\mapsto 1/z$ (regarding $1/0$ as $\infty$), $(\iota\circ\psi\circ\iota)'(0)=1/(\iota\circ\psi^{-1}\circ\iota)'(0)\neq 0$. 

We first claim that 
\begin{gather}
\frac{(f^n)'}{f^n}(z)
=d^n\cdot\bigl(1+O(\psi(z)^{-d^n})\bigr)\cdot\frac{\psi'}{\psi}(z)\quad\text{as }n\to\infty\label{eq:nearinfty} 
\end{gather}
on $\bC\setminus\{g_f\le r\}$ uniformly; indeed, 
for every $n\in\bN$, since 
$\psi(f^n(z))=\psi(z)^{d^n}$ on $\bC\setminus\{g_f\le r\}$,
we have
$f^n(z)=\psi^{-1}(\psi(z)^{d^n})$ 
and $\psi'(f^n(z))\cdot(f^n)'(z)=d^n\cdot\psi(z)^{d^n-1}\cdot\psi'(z)$
on $\bC\setminus\{g_f\le r\}$, 
so that
\begin{gather*}
\frac{(f^n)'(z)}{f^n(z)}
=\frac{d^n\cdot\psi(z)^{d^n-1}\cdot\psi'(z)}{\psi^{-1}\bigl(\psi(z)^{d^n}\bigr)\cdot\psi'(f^n(z))}
= d^n\cdot\frac{\frac{\psi(z)^{d^n}}{\psi^{-1}(\psi(z)^{d^n})}}{\psi'(f^n(z))} \cdot \frac{\psi'(z)}{\psi(z)}
\end{gather*}
on $\bC\setminus\{g_f\le r\}$. Moreover, we have
\begin{align*}
\frac{\psi(z)^{d^n}}{\psi^{-1}(\psi(z)^{d^n})}
=&\frac{(\iota\circ\psi^{-1}\circ\iota)(1/\psi(z)^{d^n})-(\iota\circ\psi^{-1}\circ\iota)(0)}{1/\psi(z)^{d^n}-0}
=(\iota\circ\psi^{-1}\circ\iota)'(0)+O(1/\psi(z)^{d^n})\\
=&\frac{1}{(\iota\circ\psi\circ\iota)'(0)}+O(\psi(z)^{-d^n})
\quad\text{as }n\to\infty
\end{align*}
on $\bC\setminus\{g_f\le r\}$ uniformly and, 
since
$(\iota\circ\psi\circ \iota)'(1/f^n(z))=-\frac{\psi'(f^n(z))\cdot\{-(f^n(z)^2)\}}{\psi(f^n(z))^2}$ on $\bC\setminus\{g_f\le r\}$ by the chain rule, we also have
\begin{align*}
\psi'(f^n(z))
=&
\frac{(\iota\circ\psi\circ\iota)'(1/f^n(z))}{\bigl(\frac{(\iota\circ\psi\circ\iota)(1/f^n(z))}{1/f^n(z)}\bigr)^2}
=\frac{(\iota\circ\psi\circ\iota)'(0)+
((\iota\circ\psi\circ\iota)'(1/f^n(z))-(\iota\circ\psi\circ\iota)'(0))}{\bigl(\frac{(\iota\circ\psi\circ\iota)(1/f^n(z))-(\iota\circ\psi\circ\iota)(0)}{1/f^n(z)-0}\bigr)^2}\\
=&\frac{(\iota\circ\psi\circ\iota)'(0)+O(1/f^n(z))}{\bigl((\iota\circ\psi\circ\iota)'(0)+O(1/f^n(z))\bigr)^2}\\
=&\frac{1}{(\iota\circ\psi\circ\iota)'(0)}+O(1/f^n(z))
=\frac{1}{(\iota\circ\psi\circ\iota)'(0)}+O(\psi(z)^{-d^n})
\quad\text{as }n\to\infty
\end{align*}
on $\bC\setminus\{g_f\le r\}$ uniformly. Hence the claim holds.

For any domain $D\Subset I_\infty(f)\cap K$ 
and any $M\in\bN\cup\{0\}$ so large that
$f^M(D)\subset\bP^1\setminus\{g_f\le r\}$, by \eqref{eq:nearinfty},
we have
\begin{gather*}
\frac{(f^n)'}{f^n}=\frac{((f^{n-M})'\circ f^M)\cdot(f^M)'}{f^{n-M}\circ f^M}
=d^{n-M}\cdot\Bigl(\frac{\psi'}{\psi}\circ f^M\cdot(f^M)'\Bigr)+o(1) 
\quad\text{as }n\to\infty
\end{gather*}
on some open neighborhood of $\overline{D}$ uniformly. 
Let us show by induction 
{\em that for any $m\in\bN$,
\begin{gather}
\frac{(f^n)^{(m)}}{f^n}
=\biggl(d^{n-M}\cdot\frac{\psi'}{\psi}\circ f^M\cdot(f^M)'\biggr)^m
+O(d^{(m-1)n})\quad\text{as }n\to\infty\label{eq:logarithmic}
\end{gather}
on some open neighborhood of $\overline{D}$ uniformly};
we have just seen \eqref{eq:logarithmic} for $m=1$
on some open neighborhood of $\overline{D}$ uniformly, so 
assume that $m>1$ and that \eqref{eq:logarithmic} for $m-1$
holds on some open neighborhood of $\overline{D}$ uniformly.
Then 
using Cauchy's estimate,
we have
\begin{gather*}
 \frac{(f^n)^{(m)}}{f^n} - \frac{(f^n)^{(m-1)}\cdot (f^n)' }{f^n\cdot f^n} 
=\biggl(\frac{(f^n)^{(m-1)}}{f^n}\biggr)'
=O(d^{n(m-1)})\quad\text{as }n\to\infty    
\end{gather*}
on some open neighborhood of
$\overline{D}$ uniformly, which with \eqref{eq:logarithmic} for 
both $1$ and $m-1$ 
on some open neighborhood of $\overline{D}$ uniformly
yields
\begin{multline*}
\frac{(f^n)^{(m)}}{f^n}
= \frac{(f^n)^{(m-1)}\cdot (f^n)' }{f^n\cdot f^n} +O(d^{(m-1)n})  \\
= \Bigl(\Bigl(d^{n-M}\cdot\frac{\psi'}{\psi}\circ f^M\cdot(f^M)'\Bigr)^{m-1}
+O(d^{(m-2)n})\Bigr)\cdot\Bigl(d^{n-M}\cdot\frac{\psi'}{\psi}\circ f^M\cdot(f^M)'
+O(1)\Bigr)\\
+ O(d^{(m-1)n})\quad\text{as }n\to\infty 
\end{multline*}
on some open neighborhood of $\overline{D}$ uniformly.
This yields \eqref{eq:logarithmic} for $m$ 
on some open neighborhood of $\overline{D}$ uniformly
and concludes the induction. 
Now, if in addition 
$D\Subset I_\infty(f)\setminus\bigcup_{n\in\bN\cup\{0\}}f^{-n}(C(f))$, 
so $\inf_D|\frac{\psi'}{\psi}\circ f^M\cdot(f^M)'|>0$, then
the estimate \eqref{eq:logarithmic} 
yields the asymptotic estimate \eqref{eq:higherorderderivative}. 

Fix $a\in\bC$. The final locally uniform convergence \eqref{eq:aroundinfty}
follows from \eqref{eq:higherorderderivative} and \eqref{eq:Greeniterate}. Then, for every $R>0$ so large that 
$\bigcup_{n\in\bN\cup\{0\}}f^{-n}(C(f)\setminus\{\infty\})\subset\{|z|<R\}$,
we also have
\begin{gather*}
\frac{\log|(f^n)^{(m)}-a|}{d^n-m}
\le\frac{\log(2\max\{|(f^n)^{(m)}|,|a|\})}{d^n-m}
\le g_f+O(1)\quad\text{as }n\to\infty
\end{gather*}
on $\{|z|=R\}$ uniformly. Hence by
the maximum principle for subharmonic functions, we deduce that 
the family $((\log|(f^n)^{(m)}-a|)/(d^n-m))_n$ 
is locally uniformly bounded from above on $\bC$.
\end{proof}

\begin{remark}[the Schwarzian and pre-Schwarzian derivatives $S_{f^n},T_{f^n}$
of $f^n$]
The expression of $(f^n)^{(m)}$ given by \eqref{eq:logarithmic}
in the proof of Lemma \ref{th:basin}
also quantifies Ye \cite[Theorems 1.1 and 3.3]{Ye11} as
\begin{gather*}
 S_{f^n}:=\frac{(f^n)'''}{(f^n)'}-\frac{3}{2}\Bigl(\frac{(f^n)''}{(f^n)'}\Bigr)^2
=-2d^{2n}\cdot(\partial_zg_f)^2+O(d^n)\quad\text{and}\\
T_{f^n}:=\frac{(f^n)''}{(f^n)'}=2d^n\cdot\partial_zg_f+O(1)
\quad\text{as }n\to\infty 
\end{gather*}
on $I_\infty(f)\setminus\bigcup_{n\in\bN\cup\{0\}}f^{-n}(C(f))$ locally uniformly.
Indeed, recall that $g_f=\log|\psi|$ so 
$\partial_zg_f=\psi'/(2\psi)$
on $\bC\setminus\{g_f\le r\}$, and  $g_f\circ f=d\cdot g_f$
so $(\partial_zg_f)\circ f^M\cdot(f^M)'=d^M\cdot\partial_z g_f$
on $I_\infty(f)$. Hence \eqref{eq:logarithmic} is rewritten as
\begin{align*}
(f^n)^{(m)}
&=\bigl(\bigl(d^{n-M}\cdot(2\partial_z g_f)\circ f^M\cdot (f^M)'\bigr)^m
+O(d^{(m-1)n})\bigr)\cdot f^n\\
&=\bigl(\bigl(2d^n\cdot\partial_z g_f)^m+O(d^{(m-1)n})\bigr)\cdot f^n
\quad\text{as }n\to\infty
\end{align*}
on $\overline{D}$ uniformly. 
This for $m\in\{1,2,3\}$ yields the above asymptotics of $S_{f^n}$ and $T_{f^n}$.
\end{remark}

Fix $a\in\bC$, and let us continue the proof of Theorem \ref{th:anyorder}.
By the final two assertions in Lemma \ref{th:basin},
applying to $((\log|(f^n)^{(m)}-a|)/(d^n-m))_n$
a {\em compactness principle} (see \cite[Theorem 4.1.9(a)]{Hormander83})
for a family of subharmonic functions on a domain in $\bR^N$,
there are a sequence
$(n_j)$ in $\bN$ tending to $+\infty$ as $j\to\infty$ and 
a subharmonic function $\phi$ on $\bC$ such that
\begin{gather}
 \phi:=\lim_{j\to\infty}\frac{\log|(f^{n_{j}})^{(m)}-a|}{d^{n_{j}}-m}
\quad\text{in }L^1_{\operatorname{loc}}(\bC,m_2) 
\label{eq:subsequence}
\end{gather}
($m_2$ denotes the (real $2$-dimensional) Lebesgue measure on $\bC$). 
By \eqref{eq:aroundinfty},
we have $\phi\equiv g_f$ $m_2$-a.e.\ on 
$I_\infty(f)\setminus\bigcup_{n\in\bN\cup\{0\}}f^{-n}(C(f))$,
and in turn on $I_\infty(f)$ 
by the subharmonicity of $\phi-g_f$ on $I_\infty(f)\cap\bC$. 
Then also by $I_\infty(f)=\{g_f>0\}$, the subharmonicity of $\phi$ on $\bC$, 
and the maximum principle for subharmonic functions,
we have $\phi\le\max_{\{g_f=\epsilon\}}\phi=\max_{\{g_f=\epsilon\}}g_f=\epsilon$ 
on $K(f)=\{g_f=0\}\subset\{g_f<\epsilon\}$ for every $\epsilon>0$,
and in turn $\phi\le 0$ on $K(f)$.
By the upper semicontinuity
of $\phi-g_f$ on $\bC$, the subset $\{\phi<g_f\}$ is open in $\bC$.

\begin{lemma}\label{th:limit}
If $a\neq 0$, then $\phi= g_f$ on $\bC$.
\end{lemma}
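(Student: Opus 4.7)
The plan is to upgrade the already-established inequality $\phi\le g_f$ to the equality $\phi\equiv g_f$ on $\bC$. Since $\phi=g_f$ on $I_\infty(f)$ and $g_f\equiv 0$ on $K(f)$, it suffices to prove $\phi\ge 0$ on $K(f)$. A preliminary boundary remark: for any $z\in J(f)=\partial K(f)=\partial I_\infty(f)$, approaching $z$ through $I_\infty(f)$ gives $\phi(w)=g_f(w)\to 0$, so the upper semicontinuity of $\phi$ yields $\phi(z)\ge\limsup_{w\to z}\phi(w)\ge 0$; combined with $\phi\le 0$ on $K(f)$ we obtain $\phi\equiv 0$ on $J(f)$. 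The remaining task is to show $\phi\equiv 0$ on each Fatou component $U\subset K(f)^\circ$, on which $\phi|_U$ is subharmonic, bounded above by $0$, and vanishes on $\partial U\subset J(f)$.

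The key reduction is that it suffices to prove $\phi|_U$ is harmonic on $U$; combined with the vanishing boundary data, the maximum principle applied to $\pm\phi$ then forces $\phi\equiv 0$. Harmonicity of $\phi|_U$ means that the Riesz measure $\Delta\phi$ has no mass on $U$; since $\phi$ is the $L^1_{\mathrm{loc}}$-limit of $\log|(f^{n_j})^{(m)}-a|/(d^{n_j}-m)$, whose Laplacians are $((f^{n_j})^{(m)})^*\delta_a/(d^{n_j}-m)$, this amounts to showing that for every open $V$ with $\overline{V}\subset U$, the number of solutions of $(f^n)^{(m)}=a$ in $V$ (with multiplicities) is $o(d^n)$ along the subsequence. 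The family $\{(f^n)^{(m)}|_U\}_n$ is normal on $U$: in an attracting, super-attracting, or parabolic component, $f^n|_U$ converges uniformly on compacta to a periodic cycle and Cauchy's estimate forces $(f^n)^{(m)}\to 0$; in a Siegel disk, $f^n|_U$ stays in the bounded set $U$, so Cauchy's estimate gives boundedness of all derivatives on compacta. By Hurwitz's theorem, as long as no subsequential limit of $(f^n)^{(m)}|_U$ is identically equal to $a$, the number of solutions of $(f^n)^{(m)}=a$ in each compact subset of $U$ is $O(1)$, a fortiori $o(d^n)$. On non-Siegel components every subsequential limit equals $0\neq a$, so harmonicity of $\phi|_U$ is immediate.

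The main obstacle is the Siegel case. If $U$ is a Siegel disk of period $p$ with linearization $h\colon U\to\{|w|<r\}$ satisfying $h\circ f^p=e^{2\pi i\alpha}h$, $\alpha\in\bR\setminus\bQ$, every subsequential limit of $(f^n)^{(m)}|_U$ has the form $(f^r\circ g_\lambda)^{(m)}$ with $g_\lambda(z):=h^{-1}(\lambda h(z))$, $|\lambda|=1$, and $r\in\{0,\dots,p-1\}$. Ruling out $(f^r\circ g_\lambda)^{(m)}\equiv a$ for $a\neq 0$ is the crux: by $m$-fold integration, such an identity would force $f^r\circ g_\lambda$ to extend to a polynomial of degree exactly $m$ on $\bC$ whose restriction to $U$ is a biholomorphism onto $f^r(U)$. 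The plan to derive a contradiction exploits the multiplicative structure $g_\lambda\circ g_\mu=g_{\lambda\mu}$, so that the $k$-fold iterate of $g_\lambda$ equals $g_{\lambda^k}$ and lands on a dense subgroup of the unit circle, yielding analogous polynomial identities of varying degrees; combined with an analysis of the Taylor series of $g_\lambda$ at the Siegel center $v_0=h^{-1}(0)$, the putative polynomiality of $f^r\circ g_\lambda$ would force the linearization $h$ to truncate after finitely many terms, incompatible with the generic shape of a Siegel disk (the special case of a round disk with affine $h$ being treatable separately by a direct calculation).
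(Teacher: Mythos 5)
Your proposal has a clean and appealing overall architecture (reduce to harmonicity of $\phi$ on Fatou components, non‑Siegel components are easy, Siegel disks are the crux), but the plan to handle the Siegel case is unworkable, and the max‑principle step is also not justified as written.

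The fatal problem is in the Siegel case. You want to rule out that any subsequential limit of $(f^n)^{(m)}|_U$ is identically $a$, and then deduce harmonicity by Hurwitz. But subsequential limits with $(f^r\circ g_\lambda)^{(m)}\equiv a$, $a\neq 0$, \emph{do occur}. Concretely, take $m=1$, $a=1$: along a subsequence with $\lambda^{(n_j-n_N)/p}\to 1$, the limit is $g=\mathrm{id}$ on $U$ (so $r=0$, $\lambda_0=1$, $g_1=\mathrm{id}$), and $g'\equiv 1=a$. There is no contradiction to be extracted from ``$g_{\lambda_0}$ is a polynomial'' because $\mathrm{id}$ is a polynomial of degree $1=m$; and iterating this $g_{\lambda_0}$ gives only the identity again, so the ``dense subgroup of $\partial\bD$'' idea produces nothing. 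In particular your Hurwitz step collapses, and harmonicity of $\phi$ on the Siegel disk cannot be obtained this way. The actual proof of the lemma does not try to exclude such limits at all; instead it exploits the \emph{rate} of convergence. The crucial quantitative input, cited as \cite[(3.8)]{OkuyamaDerivatives}, is
\begin{gather*}
\lim_{j\to\infty}\frac{\log|\lambda^{(n_j-n_N)/p}-\lambda_0|}{d^{n_j}-m}=0,
\end{gather*}
i.e.\ the rotation numbers approach $\lambda_0$ only subexponentially in $d^{n_j}$, whereas Hartogs's lemma forces $(f^{n_j})^{(m)}-a\to 0$ exponentially fast on compacts of $U$ when $\phi<0$ there. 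Expanding $h^{-1}(\lambda^{(n_j-n_N)/p}h\circ f^{n_N})-h^{-1}(\lambda_0 h\circ f^{n_N})$ in powers of $\lambda^{(n_j-n_N)/p}-\lambda_0$ inside Cauchy's integral formula, this mismatch of rates is extracted order by order: an induction over $\ell$ shows that $\bigl((h^{-1})^{(\ell)}(\lambda_0 h\circ f^{n_N})\cdot(h\circ f^{n_N})^{\ell}\bigr)^{(m)}\equiv 0$ for every $\ell$, from which $(h^{-1})^{(\ell)}\equiv 0$ on a punctured disk for all $\ell\ge m$, so $h^{-1}$ is a polynomial of degree $<m$, and the functional equation $f^p(h^{-1}(w))=h^{-1}(\lambda w)$ then forces $h^{-1}$ to be constant, a contradiction. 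None of this is present in your sketch, and without it the Siegel case genuinely does not close.

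A secondary issue: the statement ``$\phi$ harmonic on $U$ with vanishing boundary data, so the maximum principle applied to $\pm\phi$ gives $\phi\equiv 0$'' is not a correct application of the maximum principle. Applied to $\phi$ it gives $\phi\le 0$, which you already had; applied to $-\phi$ it would need $\liminf_{z\to\partial U,\,z\in U}\phi(z)\ge 0$, and upper semicontinuity of $\phi$ gives precisely the wrong inequality at $\partial U$. (The conclusion could be salvaged via a potential‑theoretic domination/uniqueness argument, as is done in the non‑archimedean case of the paper, but that is a different tool and would need to be stated.) In any case the paper's proof of this lemma sidesteps harmonicity entirely: it assumes $\{\phi<g_f\}\neq\emptyset$, extracts a Fatou component $U\subset K(f)$ on which $\phi<0$, shows the subsequential limit $g$ of $f^{n_j}|_U$ satisfies $g^{(m)}\equiv a$, and then proves (via the quantitative argument above) that $g$ must be constant, hence $a=0$.
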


\begin{proof}
Suppose that $\{\phi<g_f\}\neq\emptyset$ and let us show $a=0$. 
By $\phi\equiv g_f$ on $I_\infty(f)$, there is a Fatou component $U\subset K(f)$
of $f$ containing a component $W$ of $\{\phi<g_f\}$. 
Since $\phi\le g_f=0$ on $U$, we in fact have $U=W$
by the maximum principle for subharmonic functions.

\subsection*{(I)}
Taking a subsequence of $(n_j)$ if necessary, there is a locally uniform limit 
\begin{gather*}
 g:=\lim_{j\to\infty}f^{n_j}\quad\text{on }U. 
\end{gather*}
We claim that 
\begin{gather*}
 g^{(m)}\equiv a
\end{gather*}
on $U$, so in particular we can say $g\in\bC[z]$ (of degree $\le m$); indeed, for any domain $D\Subset U=W$, by 
Hartogs's lemma for a sequence of
subharmonic functions on a domain in $\bR^N$ 
(see \cite[Theorem 4.1.9(b)]{Hormander83}),
we have 
\begin{gather}
 \limsup_{j\to\infty}\sup_{\overline{D}}\frac{\log|(f^{n_j})^{(m)}-a|}{d^{n_j}-m}
\le\sup_{\overline{D}}\phi<0.\label{eq:Hartogs} 
\end{gather}
Then
$g^{(m)}=\bigl(\lim_{j\to\infty}(f^{n_j})\bigr)^{(m)}
=\lim_{j\to\infty}\bigl((f^{n_j})^{(m)}\bigr)\equiv a$ on $D$,
so the claim holds.
In the case that $g$ is constant, we have $g^{(m)}\equiv 0=a$, so we are done.

\subsection*{(II)} 
Let us assume that $g$ is non-constant.
Then by Hurwitz's theorem and Fatou's 
classification of cyclic Fatou components of $f$
(see, e.g., \cite[\S16]{Milnor4}),
there is $N\in\bN$ such that $V:=f^{n_N}(U)=g(U)(\supset g(\overline{D}))$
is a Siegel disk of $f$. Setting $p:=\min\{n\in\bN:f^n(V)=V\}$,
for any $j\ge N$, we have $p|(n_j-n_N)$ 
and there is a holomorphic injection
$h:V\to\bC$ such that for some $\alpha\in\bR\setminus\bQ$, setting
$\lambda:=e^{2i\pi\alpha}\in\partial\bD$, we have
$h\circ f^p=\lambda\cdot h$ on $V$. Hence for every $j\ge N$,
\begin{gather}
h\circ f^{n_j}=\lambda^{(n_j-n_N)/p}\cdot(h\circ f^{n_N})\quad\text{on }U.\label{eq:functional} 
\end{gather}
Taking a subsequence of $(n_j)$ if necessary, the limit
\begin{gather*}
 \lambda_0:=\lim_{j\to\infty}\lambda^{(n_j-n_N)/p}\in\partial\bD
\end{gather*}
also exists and then
\begin{gather}
 h\circ g=\lambda_0\cdot(h\circ f^{n_N})\quad\text{on }U\tag{\ref{eq:functional}$'$}.\label{eq:functionallimit} 
\end{gather}
Set $v_0:=h^{-1}(0)$ and fix $z_0\in U\cap f^{-n_N}(v_0)$, so that 
$f^p(v_0)=v_0=g(z_0)$ and $(f^p)'(v_0)=\lambda$.
For every $0<r\ll 1$, $\{|w|<2r\}\Subset h(V)$, and
letting $D_r$ be a component of
$(h\circ f^{n_N})^{-1}(\{|w|<r\})$ containing $z_0$, 
the restriction $h\circ f^{n_N}:D_r\setminus\{z_0\}\to\{0<|w|<r\}$
is an unramified covering of degree $\deg_{z_0}(f^{n_N})=\deg_{z_0}g$. Hence, 
the restriction 
$h\circ g:D_r\setminus\{z_0\}\to\{0<|w|<r\}$ is also an unramified covering 
of the same degree as that of $h\circ f^{n_N}|D_r$
by Hurwitz's theorem.
Let us denote by $h^{-1}$ the holomorphic inverse of 
the biholomorphism $h:V\to h(V)\subset\bC$.

Let us see by induction that for any $\ell\in\bN$, 
\begin{gather}
\bigl((h^{-1})^{(\ell)}(\lambda_0\cdot h\circ f^{n_N}(z))\cdot
(h\circ f^{n_N}(z))^\ell\bigr)^{(m)}
\equiv 0\quad\text{on }D_r;\label{eq:key}
\end{gather}
indeed, for every $j\ge N$, applying Cauchy's integration formula to
$f^{n_j}-g$ on $D_r$, by 
$g^{(m)}\equiv a$, \eqref{eq:functional}, and \eqref{eq:functionallimit},
we have
\begin{align}
\notag&\frac{(f^{n_j})^{(m)}(z)-a}{m!}
=\int_{\partial D_r}\frac{f^{n_j}(\zeta)-g(\zeta)}{(\zeta-z)^{m+1}}\frac{\rd\zeta}{2i\pi}\\
=&\int_{\partial D_r}\frac{h^{-1}(\lambda^{(n_j-n_N)/p}\cdot h\circ f^{n_N}(\zeta))
-h^{-1}(\lambda_0\cdot h\circ f^{n_N}(\zeta))}{(\zeta-z)^{m+1}}\frac{\rd\zeta}{2i\pi}\label{eq:difference}
\\
\notag=&(\lambda^{(n_j-n_N)/p}-\lambda_0)\cdot
\int_{\partial D_r}\frac{
\frac{h^{-1}(\lambda^{(n_j-n_N)/p}\cdot h\circ f^{n_N}(\zeta))
-h^{-1}(\lambda_0\cdot h\circ f^{n_N}(\zeta))}{\lambda^{(n_j-n_N)/p}\cdot h\circ f^{n_N}(\zeta)-\lambda_0\cdot h\circ f^{n_N}(\zeta)}\cdot(h\circ f^{n_N}(\zeta))}{(\zeta-z)^{m+1}}\frac{\rd\zeta}{2i\pi}\\ 
\notag=&(\lambda^{(n_j-n_N)/p}-\lambda_0)\\
\notag&\times\int_{\partial D_r}\frac{\bigl((h^{-1})'(\lambda_0\cdot h\circ f^{n_N}(\zeta))+O\bigl(\lambda^{(n_j-n_N)/p}-\lambda_0\bigr)\bigr)\cdot
(h\circ f^{n_N}(\zeta))}{(\zeta-z)^{m+1}}\frac{\rd\zeta}{2i\pi}\quad\text{as }j\to\infty
\end{align}
on $D_r$, where recalling $h\circ f^{n_N}(\partial D_r)=\{|w|=r\}$
and $\{|w|<2r\}\Subset h(V)$
and applying Cauchy's estimate 
to the holomorphic function $h^{-1}|\{w'\in\bC:|w'-w|\le r\}$ for each $|w|=r$,
the $O(\lambda^{(n_j-n_N)/p}-\lambda_0)$ term is estimated as
\begin{align*}
&|O\bigl(\lambda^{(n_j-n_N)/p}-\lambda_0\bigr)|\\
\le&\sum_{k=2}^\infty\frac{|(h^{-1})^{(k)}(\lambda_0\cdot h\circ f^{n_N}(\zeta))|}{k!}|\lambda^{(n_j-n_N)/p}\cdot h\circ f^{n_N}(\zeta)-\lambda_0\cdot h\circ f^{n_N}(\zeta)|^{k-1}\\
\le&\sum_{k=2}^\infty\frac{\max_{|w|=r}|(h^{-1})^{(k)}(w)|}{k!}
(|\lambda^{(n_j-n_N)/p}-\lambda_0|\cdot r)^{k-1}\\
\le&\sum_{k=2}^\infty
\frac{\max_{|w|=2r}|h^{-1}(w)|}{r^k}(|\lambda^{(n_j-n_N)/p}-\lambda_0|\cdot r)^{k-1}\\
=&\frac{\max_{|w|=2r}|h^{-1}(w)|}{r}\cdot\frac{|\lambda^{(n_j-n_N)/p}-\lambda_0|}{1-|\lambda^{(n_j-n_N)/p}-\lambda_0|}
\quad\text{on }\partial D_r
\end{align*}
so the implicit constant of it
is independent of $z\in D_r$ and $\zeta\in\partial D_r$.
On the other hand, for every $z\in D_r$, by \eqref{eq:Hartogs} 
and \cite[(3.8)]{OkuyamaDerivatives}, we also have
\begin{gather}
 \limsup_{j\to\infty}\frac{\log|(f^{n_j})^{(m)}(z)-a|}{d^{n_j}-m}<0
\quad\text{and}\quad
\lim_{j\to\infty}\frac{\log|\lambda^{(n_j-n_N)/p}-\lambda_0|}{d^{n_j}-m}=0.\label{eq:apriorihigher}
\end{gather}
Hence also by Cauchy's integration formula, we have 
\begin{multline*}
\bigl((h^{-1})'(\lambda_0\cdot h\circ f^{n_N}(z))\cdot
h\circ f^{n_N}(z)\bigr)^{(m)}
=m!\int_{\partial D_r}\frac{(h^{-1})'(\lambda_0\cdot h\circ f^{n_N}(\zeta))\cdot
(h\circ f^{n_N}(\zeta))}{(\zeta-z)^{m+1}}\frac{\rd\zeta}{2i\pi}
\equiv 0
\end{multline*}
on $D_r$, that is, \eqref{eq:key} holds for $\ell=1$.

Next, suppose that \eqref{eq:key} holds for $1,\ldots,\ell-1$. Then
applying Cauchy's integration formula to $((h^{-1})^{(k)}(\lambda_0\cdot h\circ f^{n_N}(z))\cdot(h\circ f^{n_N}(z))^k)^{(m)}\equiv 0$ on $D_r$
for $k\in\{1,\ldots,\ell-1\}$, also by \eqref{eq:difference}, we have 
\begin{align*}
&\frac{(f^{n_j})^{(m)}(z)-a}{m!}\\
=&\frac{(f^{n_j})^{(m)}(z)-a}{m!}
-\sum_{k=1}^{\ell-1}(\lambda^{(n_j-n_N)/p}-\lambda_0)^k\cdot\frac{\bigl((h^{-1})^{(k)}(\lambda_0\cdot h\circ f^{n_N}(z))\cdot(h\circ f^{n_N}(z))^k\bigr)^{(m)}}{m!k!}\\
=&\int_{\partial D_r}\frac{h^{-1}(\lambda^{(n_j-n_N)/p}\cdot h\circ f^{n_N}(\zeta))
-h^{-1}(\lambda_0\cdot h\circ f^{n_N}(\zeta))}{(\zeta-z)^{m+1}}\frac{\rd\zeta}{2i\pi}\\
&-\sum_{k=1}^{\ell-1}(\lambda^{(n_j-n_N)/p}-\lambda_0)^k\cdot\int_{\partial D_r}\frac{\frac{1}{k!}(h^{-1})^{(k)}(\lambda_0\cdot h\circ f^{n_N}(\zeta))\cdot
(h\circ f^{n_N}(\zeta))^k}{(\zeta-z)^{m+1}}\frac{\rd\zeta}{2i\pi}\\
=&\int_{\partial D_r}\frac{\sum_{k=\ell}^{\infty}\frac{1}{k!}
(h^{-1})^{(k)}(\lambda_0\cdot h\circ f^{n_N}(\zeta))\cdot
\bigl(\lambda^{(n_j-n_N)/p}\cdot h\circ f^{n_N}(\zeta)-\lambda_0\cdot h\circ f^{n_N}(\zeta)\bigr)^k}{(\zeta-z)^{m+1}}\frac{\rd\zeta}{2i\pi}\\
=&(\lambda^{(n_j-n_N)/p}-\lambda_0)^\ell\times\\
&\times\int_{\partial D_r}\frac{
\frac{\sum_{k=\ell}^{\infty}\frac{1}{k!}
(h^{-1})^{(k)}(\lambda_0\cdot h\circ f^{n_N}(\zeta))\cdot
(\lambda^{(n_j-n_N)/p}\cdot h\circ f^{n_N}(\zeta)-\lambda_0\cdot h\circ f^{n_N}(\zeta))^k}{(\lambda^{(n_j-n_N)/p}\cdot h\circ f^{n_N}(\zeta)-\lambda_0\cdot h\circ f^{n_N}(\zeta))^\ell}\cdot\bigl(h\circ f^{n_N}(\zeta)\bigr)^\ell}{(\zeta-z)^{m+1}}\frac{\rd\zeta}{2i\pi}\\
=&(\lambda^{(n_j-n_N)/p}-\lambda_0)^\ell\times\\
&\times\int_{\partial D_r}
\frac{\frac{1}{\ell!}\bigl((h^{-1})^{(\ell)}(\lambda_0\cdot h\circ f^{n_N}(\zeta))+O\bigl(\lambda^{(n_j-n_N)/p}-\lambda_0\bigr)\bigr)\cdot
\bigl(h\circ f^{n_N}(\zeta)\bigr)^\ell}{(\zeta-z)^{m+1}}\frac{\rd\zeta}{2i\pi}\quad\text{as }j\to\infty
\end{align*}
on $D_r$, where
recalling $h\circ f^{n_N}(\partial D_r)=\{|w|=r\}$
and $\{|w|<2r\}\Subset h(V)$ 
and applying Cauchy's estimate to
the holomorphic function $h^{-1}|\{w'\in\bC:|w'-w|\le r\}$ for each $|w|=r$,
the $O(\lambda^{(n_j-n_N)/p}-\lambda_0)$ term is estimated as
\begin{align*}
&|O\bigl(\lambda^{(n_j-n_N)/p}-\lambda_0\bigr)|\\
\le&\sum_{k=\ell+1}^\infty\frac{|(h^{-1})^{(k)}(\lambda_0\cdot h\circ f^{n_N}(\zeta))|}{k!}|\lambda^{(n_j-n_N)/p}\cdot h\circ f^{n_N}(\zeta)-\lambda_0\cdot h\circ f^{n_N}(\zeta)|^{k-\ell}\\
\le&\sum_{k=\ell+1}^\infty\frac{\max_{|w|=r}|(h^{-1})^{(k)}(w)|}{k!}(|\lambda^{(n_j-n_N)/p}-\lambda_0|\cdot r)^{k-\ell}\\
\le&\sum_{k=\ell+1}^\infty
\frac{\max_{|w|=2r}|h^{-1}(w)|}{r^k}(|\lambda^{(n_j-n_N)/p}-\lambda_0|\cdot r)^{k-\ell}\\
=&\frac{\max_{|w|=2r}|h^{-1}(w)|}{r^\ell}\cdot\frac{|\lambda^{(n_j-n_N)/p}-\lambda_0|}{1-|\lambda^{(n_j-n_N)/p}-\lambda_0|}
\quad\text{on }\partial D_r 
\end{align*}
so the implicit constant of it 
is independent of $z\in D_r$ and $\zeta\in\partial D_r$. Hence by \eqref{eq:apriorihigher} again, 
also using  Cauchy's integration formula, we have
\begin{multline*}
\bigl((h^{-1})^{(\ell)}(\lambda_0\cdot h\circ f^{n_N}(z))\cdot
(h\circ f^{n_N}(z))^\ell\bigr)^{(m)}\\
=m!\int_{\partial D_r}\frac{(h^{-1})^{(\ell)}(\lambda_0\cdot h\circ f^{n_N}(\zeta))\cdot(h\circ f^{n_N}(\zeta))^\ell}{(\zeta-z)^{m+1}}\frac{\rd\zeta}{2i\pi}
\equiv 0\quad\text{on }D_r,
\end{multline*}
that is, \eqref{eq:key} holds for $\ell$ and concludes the induction.

Once this claim \eqref{eq:key} is at our disposal, for every $\ell\in\bN$,
there is $P_\ell\in\bC[z]$ of degree $<m$ such that
\begin{gather*}
 (h^{-1})^{(\ell)}(\lambda_0\cdot h\circ f^{n_N}(z))\cdot
(h\circ f^{n_N}(z))^\ell\equiv P_\ell(z)\quad\text{on }D_r. 
\end{gather*}
Then recalling $(h\circ f^{n_N})(z_0)=0$, 
for every $\ell\ge m$, we have
$P_\ell\equiv P_\ell(z_0)=0$; for, otherwise, 
we must have $m>\deg P_\ell\ge\deg_{z_0}P_\ell\ge\ell\ge m$, 
which is a contradiction.
Consequently, 
also by \eqref{eq:functionallimit}
and $(h\circ f^{n_N})(D_r\setminus\{z_0\})=\{0<|w|<r\}$, 
for every $\ell\ge m$, 
\begin{gather*}
 (h^{-1})^{(\ell)}\bigl((h\circ g)(z)\bigr)
=(h^{-1})^{(\ell)}\bigl(\lambda_0\cdot h\circ f^{n_N}(z)\bigr)\equiv 0
\quad\text{on }D_r,
\end{gather*}
which implies that there is $Q\in\bC[z]$ (of degree $<m$) such that
$h^{-1}\equiv Q$ on $\{0<|w|<r\}$ 
since $h\circ g:D_r\setminus\{z_0\}\to\{0<|w|<r\}$
is an unramified covering.
Then $\deg Q>0$.

On the other hand, we also have
\begin{gather*}
 f^p(Q(w))=f^p(h^{-1}(w))=h^{-1}(\lambda w)=Q(\lambda w)\quad\text{on }\{0<|w|<r\},
\end{gather*}
and in turn $f^p(Q(w))=Q(\lambda w)$ in $\bC[w]$
by the identity theorem for holomorphic functions.
Then $Q\in\bC[w]$ must be constant since $\deg(f^p)=d^p>1$. 
This contradicts $\deg Q>0$.

Hence $g$ is constant, and the proof of Lemma \ref{th:limit} is complete.
\end{proof}

Using Lemma~\ref{th:limit},
the $L^1_{\operatorname{loc}}(\bC,m_2)$-convergence 
\eqref{eq:subsequence}, 
a continuity of the Laplacian $\Delta$, and the equalities
\begin{gather*}
\Delta\frac{\log|(f^{n_j})^{(m)}-a|}{d^{n_j}-m}=\frac{\bigl((f^{n_j})^{(m)}\bigr)^*\delta_a}{d^{n_j}-m}
\quad\text{on }\bC
\end{gather*}
for each $j\in\bN$ and $\Delta g_f=\mu_f$ on $\bC$, 
whenever $a\in\bC\setminus\{0\}$, 
we conclude the
desired weak convergence \eqref{eq:equidistderivative} on $\bC$,
and in turn on $\bP^1$ since $\supp\mu_f\subset\bC$. 
Now the proof of Theorem \ref{th:anyorder} is complete. 
\qed

\subsection{On the proof of Theorem \ref{th:anyorder} for the first and second orders derivatives}
\label{sec:firstsecond}

In step (II) of the proof of Lemma \ref{th:limit} in Section \ref{sec:background},
it might be interesting 
to show that $a=0$ 
by direct computations in the case where $g$ is non-constant,
instead of showing that $g$ is constant by contradiction. 
We include herewith such proofs in (II)' and (II)'' below for the first and second orders derivatives cases $m=1,2$, respectively.

\subsection*{(II)'} Here, assume that $m=1$ and that $g$ is non-constant.
For any $j\ge N$, differentiating both sides in \eqref{eq:functional},
by the chain rule, we have
\begin{gather*}
 (h'\circ f^{n_j})\cdot(f^{n_j})'
=\lambda^{(n_j-n_N)/p}\cdot (h'\circ f^{n_N})\cdot(f^{n_N})'\quad\text{on }U,
\end{gather*}
so that evaluating them at $z=z_0$, also by $h'(v_0)\neq 0$, we have
\begin{gather*}
(f^{n_j})'(z_0)=\lambda^{(n_j-n_N)/p}\cdot(f^{n_N})'(z_0)\quad\text{and making }j\to\infty,\\
g'(z_0)=a=\lambda_0\cdot(f^{n_N})'(z_0)
\end{gather*}
(here $m=1$). Hence for any $j\ge N$, we have
\begin{gather*}
\bigl(\lambda^{(n_j-n_N)/p}-\lambda_0\bigr)(f^{n_N})'(z_0)
=(f^{n_j})'(z_0)-a.
\end{gather*}
On the other hand, by \eqref{eq:Hartogs} (here $m=1$) and \cite[(3.8)]{OkuyamaDerivatives},
we have
\begin{gather*}
 \limsup_{j\to\infty}\frac{\log|(f^{n_j})'(z_0)-a|}{d^{n_j}-1}<0
\quad\text{and}\quad
 \lim_{j\to\infty}\frac{\log|\lambda^{(n_j-n_N)/p}-\lambda_0|}{d^{n_j}-1}=0.
\end{gather*}
Hence we have
\begin{gather}
 (f^{n_N})'(z_0)=0,\label{eq:vanishfirstiteration} 
\end{gather}
which with $a=\lambda_0\cdot (f^{n_N})'(z_0)$ yields $a=0$. \qed

\subsection*{(II)''} Now, assume that $m=2$
and that $g$ is non-constant.
For any $j\ge N$, differentiating both sides in \eqref{eq:functional} twice, 
by the chain rule, we have
\begin{gather*}
 (h'\circ f^{n_j})\cdot(f^{n_j})'
=\lambda^{(n_j-n_N)/p}\cdot (h'\circ f^{n_N})\cdot(f^{n_N})'\quad\text{and then}\\
(h''\circ f^{n_j})\cdot((f^{n_j})')^2+(h'\circ f^{n_j})\cdot(f^{n_j})''
=\lambda^{(n_j-n_N)/p}\cdot\bigl((h''\circ f^{n_N})\cdot((f^{n_N})')^2
+(h'\circ f^{n_N})\cdot(f^{n_N})''\bigr)
\end{gather*}
on $U$, so that evaluating them at $z=z_0$, also by $h'(v_0)\neq 0$, we have
\begin{gather}
(f^{n_j})'(z_0)=\lambda^{(n_j-n_N)/p}\cdot(f^{n_N})'(z_0)\quad\text{and}\label{eq:derivative}\\
h''(v_0)((f^{n_j})'(z_0))^2+h'(v_0)(f^{n_j})''(z_0)
=\lambda^{(n_j-n_N)/p}\cdot\bigl(h''(v_0)\cdot((f^{n_N})'(z_0))^2
+h'(v_0)(f^{n_N})''(z_0)\bigr),\label{eq:secondorderderivative}
\end{gather}
and in turn making $j\to\infty$, 
\begin{gather}
g'(z_0)=\lambda_0\cdot(f^{n_N})'(z_0)\quad\text{and}\label{eq:derivativelimit}\\
h''(v_0)(g'(z_0))^2+h'(v_0)a
=\lambda_0\cdot\bigl(h''(v_0)((f^{n_N})'(z_0))^2+h'(v_0)(f^{n_N})''(z_0)\bigr)\label{eq:secondderivativelimit}
\end{gather}
(here $m=2$ so $a=g''(z_0)$).
Hence for any $j\ge N$, subtracting \eqref{eq:secondderivativelimit} from 
\eqref{eq:secondorderderivative} and then eliminating $(f^{n_j})'(z_0)$
and $g'(z_0)$ by \eqref{eq:derivative} and \eqref{eq:derivativelimit},
the above four equalities yield
\begin{multline*}
 h''(v_0)\cdot\bigl((\lambda^{(n_j-n_N)/p})^2-\lambda_0^2\bigr)\bigl((f^{n_N})'(z_0)\bigr)^2
-h'(v_0)\bigl((f^{n_j})''(z_0)-a\bigr)\\
=\bigl(\lambda^{(n_j-n_N)/p}-\lambda_0\bigr)
\cdot\bigl(h''(v_0)\cdot((f^{n_N})'(z_0))^2+h'(v_0)\cdot(f^{n_N})''(z_0)\bigr),
\end{multline*}
which is rewritten as
\begin{multline}
 \frac{(f^{n_j})''(z_0)-a}{\lambda^{(n_j-n_N)/p}-\lambda_0}
=\frac{\bigl(\lambda^{(n_j-n_N)/p}+\lambda_0-1\bigr)h''(v_0)((f^{n_N})'(z_0))^2-h'(v_0)\cdot(f^{n_N})''(z_0)}{h'(v_0)}\\
=(\lambda^{(n_j-n_N)/p}-\lambda_0)\cdot\frac{h''(v_0)((f^{n_N})'(z_0))^2}{h'(v_0)}
+\frac{(2\lambda_0-1)h''(v_0)((f^{n_N})'(z_0))^2-h'(v_0)\cdot(f^{n_N})''(z_0)}{h'(v_0)}.
\label{eq:first}
\end{multline}
On the other hand, by \eqref{eq:Hartogs} (here $m=2$) and \cite[(3.8)]{OkuyamaDerivatives},
we have
\begin{gather}
 \limsup_{j\to\infty}\frac{\log|(f^{n_j})''(z_0)-a|}{d^{n_j}-2}<0
\quad\text{and}\quad
 \lim_{j\to\infty}\frac{\log|\lambda^{(n_j-n_N)/p}-\lambda_0|}{d^{n_j}-2}=0.\label{eq:apriorisecond}
\end{gather}

Hence making $j\to\infty$ in \eqref{eq:first}, we must have 
\begin{gather}
 (2\lambda_0-1)h''(v_0)((f^{n_N})'(z_0))^2-h'(v_0)\cdot(f^{n_N})''(z_0)=0,\label{eq:vanish} 
\end{gather}
which with \eqref{eq:first} in turn yields
\begin{gather}
\frac{(f^{n_j})''(z_0)-a}{(\lambda^{(n_j-n_N)/p}-\lambda_0)^2}
=\frac{(f^{n_N})''(z_0)}{2\lambda_0-1}\tag{\ref{eq:first}$'$}\label{eq:second}
\end{gather}
for any $j\ge N$.
Then by \eqref{eq:apriorisecond} again, 
from \eqref{eq:second},
we have
\begin{gather}
 (f^{n_N})''(z_0)=0,\label{eq:vanishseconditeration} 
\end{gather}
which with \eqref{eq:vanish} and \eqref{eq:derivativelimit} yields
\begin{gather}
h''(v_0)((f^{n_N})'(z_0))^2=0\quad\text{and}\quad
0=\lambda_0^2\cdot h''(v_0)((f^{n_N})'(z_0))^2=h''(v_0)(g'(z_0))^2 .\label{eq:vanishfirstiterationlimit} 
\end{gather}
Consequently, by \eqref{eq:secondderivativelimit}, 
\eqref{eq:vanishseconditeration},
\eqref{eq:vanishfirstiterationlimit},
and $h'(v_0)\neq 0$, we have $a=0$. \qed

\section{Proofs of Theorems \ref{th:derivative} and \ref{th:adelicequidist}}\label{sec:overnotC}

\subsection{Non-archimedean dynamics of polynomials of degree $>1$}\label{sec:backgroundnonarchi}
Let $K$ be an algebraically closed field that is complete with respect
to a non-trivial and non-archimedean absolute value $|\cdot|$.
The Berkovich projective line $\sP^1=\sP^1(K)$ is a compact augmentation
of the {\em classical} projective line $\bP^1=\bP^1(K)$ and is also
locally compact, Hausdorff, and uniquely arcwise connected. 
Let us see more details. As a set, the Berkovich affine line
$\sA^1=\sA^1(K)$
is the set of all multiplicative seminorms 
$K[z]$ which restricts to $|\cdot|$ on $K$. We write an element of $\sA^1$
like $\cS$ and denote it by $[\cdot]_{\cS}$ as a multiplicative
seminorm on $K[z]$. A {\em $K$-closed disk} is a subset in $K$
written as $B(a,r):=\{z\in K:|z-a|\le r\}$ for some $a\in K$ and $r\ge 0$;
by the strong triangle inequality, for any $b\in B(a,r)$,
we have $B(b,r)=B(a,r)$, and for any two $K$-closed disks $B,B'$
having non-empty intersection,
we have either $B\subset B'$ or $B\supset B'$.
By Berkovich's representation \cite{Berkovichbook},
any element $\cS\in\sA^1$ is induced by a non-increasing and nesting sequence 
$(B_n)$ of $K$-closed disks 
in that
\begin{gather}
[\phi]_{\cS}=\inf_{n\in\bN}\sup_{z\in B_n}|\phi(z)|\quad\text{for any }\phi\in K[z].\label{eq:seminorm} 
\end{gather}
In particular, each point $a\in K$ is regarded as an element of $\sA^1$
induced by the (constant sequence of the) $K$-closed disk $B(a,0)=\{a\}$, and
more generally, each $K$-closed disk $B$ is regarded as an element of $\sA^1$
induced by (the constant sequence of) $B$. 
In particular, $K$ is regarded as a subset of $\sA^1$.
The relative topology of $\sA^1$ is the weakest topology such that 
for any $\phi\in K[z]$, $\sA^1\ni\cS\mapsto[\phi]_{\cS}\in\bR_{\ge 0}$ is continuous,
and then $\sA^1$ is a locally compact, uniquely arcwise connected,
Hausdorff topological space. 
The action on $K$ of a polynomial $h\in K[z]$ 
continuously extends to $\sA^1$ as
\begin{gather}
 [\phi]_{h(\cS)}=[\phi\circ h]_{\cS}\quad\text{for every }\cS\in\sA^1,\label{eq:action}
\end{gather}
preserving $K$ and $\sA^1\setminus K$ if in addition $\deg h>0$.

As a set, $\sP^1$ is nothing but $\sA^1\cup\{\infty\}$,
regarding $\bP^1$ as $K\cup\{\infty\}$, and as a topological space,
$\sP^1$ is identified with the one-point compactification of $\sA^1$.
An ordering $\le_\infty$ on $\sA^1$ is defined
so that for any $\cS,\cS'\in\sA^1$, $\cS\le_\infty\cS'$
if and only if $[\cdot]_{\cS}\le_\infty[\cdot]_{\cS'}$ on $K[z]$,
and this $\le_\infty$ extends to the ordering on $\sP^1$ 
so that $\cS\le_\infty\infty$ for every $\cS\in\sP^1$.
For any $\cS,\cS'\in\sP^1$, if $\cS\le_{\infty}\cS'$, then set 
$[\cS,\cS']=[\cS',\cS]:=\{\cS''\in\sP^1:\cS\le_\infty\cS''\le_\infty\cS'\}$,
and in general, we have 
$[\cS,\infty]\cap[\cS',\infty]=[\cS\wedge_\infty\cS',\infty]$, 
for some (unique) $\cS\wedge_\infty\cS'\in\sP^1$, and then set 
$[\cS,\cS']:=[\cS,\cS\wedge_\infty\cS']\cup[\cS\wedge_\infty\cS',\cS']$.
These {\em closed intervals} $[\cS,\cS']\subset\sP^1$ make $\sP^1$ 
an ``$\bR$-''tree in the sense of Jonsson \cite[Definition 2.2]{Jonsson15}.
For any $\cS\in\sP^1$, the equivalence class 
$T_{\cS}\sP^1:=(\sP^1\setminus\{\cS\})/\sim$ is defined so that
for any $\cS',\cS''\in\sP^1\setminus\{\cS\}$, 
$\cS'\sim\cS''$ if $[\cS,\cS']\cap[\cS,\cS'']=[\cS,\cS'\wedge_{\cS}\cS'']$ 
for some (unique) $\cS'\wedge_{\cS}\cS''\in\sP^1\setminus\{\cS\}$. 
An element $\Vv$ of $T_{\cS}\sP^1$ is called a {\itshape direction} of $\sP^1$
at $\cS$, which is denoted by $U(\Vv)$ as a
subset in $\sP^1\setminus\{\cS\}$ and, if $\cS'\in U(\Vv)$,
also by $\overrightarrow{\cS\cS'}$.
A non-empty subset in $\sP^1$ is called a {\em simple domain}
if it is the intersection of some finitely many elements of 
$\{U(\Vv):\cS\in\sP^1,\Vv\in T_{\cS}\sP^1,\#T_{\cS}\sP^1>1\}$.
The topology of $\sP^1$ has an open basis consisting of all 
simple domains in $\sP^1$.

The point $[\cdot]_{\cO_K}$ in $\sP^1$, where $\cO_K:=\{z\in K:|z|\le 1\}$
is the ring of $K$-integers,
is called the {\em Gauss} or {\em canonical} point in $\sP^1$ and
is denoted by $\cS_{\can}$. 
Let us denote the continuous extension of $|\cdot|$ to 
$\sA^1$ by the same $|\cdot|$ for simplicity. More generally, 
let $|\cS-\cS'|$ be the {\em Hsia kernel} on 
$\sA^1$, which is
the upper semicontinuous and separately continuous extension 
to $\sA^1\times\sA^1$ of the function $|z-w|$ on $K\times K$ 
(although $\cS-\cS'$ itself is undefined unless $\cS,\cS'\in K$), and
then the function $\log|\cS-\cS'|-\log\max\{1,|\cS|\}-\log\max\{1,|\cS'|\}$
on $\sP^1\times\sP^1$ is the {\em generalized} Hsia kernel on $\sP^1$
{\em with respect to $\cS_{\can}$}, which is the upper semicontinuous
and separately continuous extension to $\sP^1\times\sP^1$
of the (normalized) chordal metric on $\bP^1$ (\cite[\S4.4]{BR10}).

The function
$\log\max\{1,|\cdot|\}$ on $\sA^1=\sP^1\setminus\{\infty\}$ extends superharmonically near $\infty$ so that
\begin{gather*}
 \Delta\log\max\{1,|\cdot|\}=\delta_{\cS_{\can}}-\delta_\infty\quad\text{on }\sP^1.
\end{gather*}
Here, the Laplacian on $\sP^1$ is denoted by $\Delta:=\Delta_{\sP^1}$
(in \cite{BR10} the opposite sign convention on $\Delta$ is adopted).
A function $g:\sP^1\to\bR\cup\{\pm\infty\}$ is 
said to be $\delta_{\cS_{\can}}$-subharmonic if 
\begin{gather}
 \mu_g:=\Delta g+\delta_{\cS_{\can}}
\end{gather}
is a probability Radon measure on $\sP^1$; for example, 
$-\log\max\{1,|\cdot|\}$ is a $\delta_{\cS_{\can}}$-subharmonic function 
on $\sP^1$. If in addition $g$ is an 
$\bR$-valued continuous function on $\sP^1$, then the function
\begin{gather}
 \cS\mapsto\int_{\sP^1}\bigl(\log|\cS-\cS'|-(g(\cS)+\log\max\{1,|\cS|\})
-(g(\cS')+\log\max\{1,|\cS'|\})\bigr)\mu_g(\cS')
\label{eq:Frostman}
\end{gather}
is constant on $\sP^1$ 
(see \cite[Proposition 8.70]{BR10}).

 The continuous
action on $\bP^1$ of a rational function
$h\in K(z)$ canonically extends
to $\sP^1$. If in addition $h$ is non-constant, then the action
of $h$ on $\sP^1$ preserves both $\bP^1$ and $\sP^1\setminus\bP^1$
and is open and surjective. The local degree function $w\mapsto \deg_{w}h$
on $\bP^1$ also canonically extends to an upper semi-continuous function
on $\sP^1$, satisfying $\sum_{\cS'\in h^{-1}(\cS)}\deg_{\cS'}h=\deg h$
for each $\cS\in\sP^1$.
In particular, the action of $h$ on $\sP^1$ induces the {\em pull-back} action
on the space of Radon measures on $\sP^1$ so that, letting $\delta_{\cS}$
be the Dirac measure on $\sP^1$ at each $\cS\in\sP^1$, 
$h^*\delta_{\cS}=\sum_{\cS'\in h^{-1}(\cS)}(\deg_{\cS'}h)\delta_{\cS'}$ on $\sP^1$.

Let $f\in K[z]$ be a polynomial of degree $d>1$. 
The {\em Berkovich} filled-in Julia set of $f$ is
\begin{gather*}
\sK(f):=\Bigl\{\cS\in\sA^1:\limsup_{n\to\infty}|f^n(\cS)|<\infty\Bigr\},
\end{gather*}
which is a compact subset in $\sA^1$,
and the {\em escape rate function} of $f$ on $\sA^1$ is the limit 
$g_f:=\lim_{n\to\infty}(\log\max\{1,|f^n|\})/d^n$ on $\sA^1$.
The difference $g_f-(\log\max\{1,|f^n|\})/d^n$ on $\sA^1$
is harmonic and bounded on a neighborhood of $\infty$, 
so it extends harmonically across $\infty$ (see e.g.\ \cite[\S 7]{BR10}), and
we have the estimate
\begin{gather}
g_f-\frac{\log\max\{1,|f^n|\}}{d^n}=O(d^{-n})\quad\text{as }n\to\infty\label{eq:Greeniteratenonarch}
\end{gather}
on $\sP^1$ uniformly. 
The function $g_f$ is continuous, subharmonic, 
and $\ge 0$ on $\sA^1$, it is harmonic and $>0$ on $\sA^1\setminus\sK(f)$,
and it is $=0$ on $\sK(f)$.
The {\em equilibrium} (or {\em canonical}) measure of $f$
is the probability Radon measure
\begin{gather*}
\mu_f:=\Delta g_f+\delta_\infty\quad\text{on }\sP^1,
\end{gather*}
which is supported exactly by $\partial\sK(f)$.
The {\em Berkovich} superattractive basin 
\begin{gather*}
\sI_\infty(f):=\Bigl\{z\in\sP^1:\lim_{n\to\infty}f^n(z)=\infty\Bigr\}
\end{gather*}
of $f$ associated to the superattracting fixed point $\infty$ of $f$ 
is a domain in $\sP^1$ containing $\infty$, and
coincides with $\sP^1\setminus\sK(f)$. 
Let $C(f)$ be the (classical) critical set of $f$, 
which consists of $\infty$ and all the (at most $d-1$) zeros of $f'$ on $K$.
Then
$\bigcup_{n\in\bN\cup\{0\}}f^{-n}(C(f)\setminus\{\infty\})$ is bounded in $K$. 

The {\em Berkovich Julia set} 
of $f$ is defined as
\begin{gather*}
 \sJ(f):=\supp\mu_f=\partial\sK(f).
\end{gather*}
The {\em Berkovich Fatou set} $\sF(f)$ of $f$ is defined by 
$\sP^1\setminus\sJ(f)$,
and a component of $\sF(f)$ is called a {\em Berkovich Fatou 
component} of $f$. Both $\sJ(f)$ and $\sF(f)$ are totally invariant
under $f$ and any Berkovich 
Fatou component of $f$ is either $\sI_\infty(f)$ or a component 
of the interior of $\sK(f)$. 

Set $c_d:=\lim_{K\ni z\to\infty}f(z)/z^d\in K^*=K\setminus\{0\}$.
Since $g_f-\log\max\{1,|\cdot|\}$ is an $\bR$-valued continuous and $\delta_{\cS_{\can}}$-subharmonic function on $\sP^1$ and satisfies
$\Delta(g_f-\log\max\{1,|\cdot|\})+\delta_{\cS_{\can}}=\mu_f$ on $\sP^1$, 
by \eqref{eq:Frostman}, the function
$\cS\mapsto\int_{\sP^1}\log|\cS-\cS'|\mu_f(\cS')-g_f(\cS)$
is constant on $\sP^1$. This with \eqref{eq:Greeniteratenonarch} yields
the identity
\begin{gather}
\int_{\sP^1}\log|\cS-\cS'|\mu_f(\cS')
\equiv g_f(\cS)-\frac{\log|c_d|}{d-1}
\bigl(=\log|\cS|+O(1/|\cS|)\text{ as }\cS\to\infty\bigr)\label{eq:potentialgreen}
\quad\text{on }\sP^1.
\end{gather}
For more details on the harmonic analysis and dynamics on $\sP^1$, see \cite{BR10,FRL}.

\subsection{Arithmetic dynamics of polynomials of degree $>1$}\label{sec:height}
Let $k$ be a product formula field as in Subsection \ref{sec:adelic}.
Let $f\in k[z]$ be a polynomial of degree $d>1$. For each $v\in M_k$,
we obtain $g_{f,v}$ and $\mu_{f,v}$ on $\sP^1(\bC_v)$ from the action of $f$
on $\bP^1(\bC_v)$. 
Writing $f(z)$ as $\sum_{j=0}^dc_jz^j\in k[z]$, so $c_d\in k^*$, 
there is
a finite set $E_f$ containing all the infinite places of $k$ such that
for every $v\in M_k\setminus E_f$, 
$|c_d|_v=1, |c_0|_v,\ldots,|c_{d-1}|_v\le 1$,
and moreover, $g_{f,v}=\log\max\{1,|\cdot|_v\}$ and $\mu_{f,v}=\delta_{\cS_{\can,v}}$
on $\sP^1(\bC_v)$.

Recall that an embedding of $\overline{k}$ to $\bC_v$ is fixed
for each $v\in M_k$. The Call-Silverman {\em $f$-canonical height} of an effective 
$k$-divisor $\cZ$ on $\bP^1(\overline{k})$ supported by $\overline{k}$ is
\begin{align}
0\le\hat{h}_f(\cZ):=&\sum_{v\in M_k}N_v\frac{\sum_{z\in\overline{k}:p(z)=0}(\deg_z p)g_{f,v}(z)}{\deg p}\label{eq:height} \\
\notag=&h_{\operatorname{nv}}(\cZ)
+\sum_{v\in E_f}N_v\frac{\sum_{z\in\overline{k}:p(z)=0}(\deg_z p)\bigl(g_{f,v}(z)-\log\max\{1,|z|_v\}\bigr)}{\deg p},
\end{align}
where $p\in k[z]$ is a representative of $\cZ$ and the {\em naive} height
\begin{gather*}
 h_{\operatorname{nv}}(\cZ):=\sum_{v\in M_k}N_v\frac{\sum_{z\in\overline{k}:p(z)=0}(\deg_z p)\log\max\{1,|z|_v\}}{\deg p}
\end{gather*}
of $\cZ$ is in fact a {\em finite} sum
by a standard argument involving the ramification theory
of valuations (or \cite[Lemma 2.3]{OkuDivisor}).
For every $v\in M_k$, setting $a_p:=p^{(\deg p)}/(\deg p)!\in k^*$, 
we have $\log|p(\cdot)|_v=\sum_{z\in\overline{k}:p(z)=0}
(\deg p)\log|\cdot-z|_v+\log|a_p|_v$
on $\sA^1(\bC_v)$, integrating both sides in which 
against $\mu_{f,v}$ over $\sP^1(\bC_v)$, also by \eqref{eq:potentialgreen},
we have
\begin{align*}
 \int_{\sP^1(\bC_v)}\log|p|_v\mu_{f,v}
=&\sum_{z\in\overline{k}:p(z)=0}(\deg_z p)\int_{\sP^1(\bC_v)}\log|z-\cS'|_v\mu_{f,v}(\cS')+\log|a_p|_v\\
=&\sum_{z\in\overline{k}:p(z)=0}(\deg_z p)g_{f,v}(z)
-(\deg p)\cdot\frac{\log|c_d|_v}{d-1}+\log|a_p|_v.
\end{align*}
Consequently, also by the product formula property of $k$, 
the defining equality \eqref{eq:height} of $\hat{h}_f(\cZ)$
is rewritten as the {\em Mahler-type formula}
\begin{gather}
 \hat{h}_f(\cZ)=\sum_{v\in M_k}N_v\frac{\int_{\sP^1(\bC_v)}\log|p|_v\mu_{f,v}}{\deg p}\tag{\ref{eq:height}$'$}\label{eq:Mahler}
\end{gather}
(cf.\ \cite[(1.1)]{OkuDivisor}).
For more details on canonical heights on $\sP^1$, 
see \cite{baker-hsia, BR06, FRL, ACL}. For the treatment of effective divisors
rather than Galois conjugacy classes, which are effective divisors
represented by {\em irreducible} polynomials, see \cite{OkuDivisor}.

\subsection{Proofs of Theorems \ref{th:derivative} and \ref{th:adelicequidist}}
\label{sec:adelic2}

Let $K$ be an algebraically closed field of characteristic $0$
that is complete with respect
to a non-trivial and non-archimedean absolute value $|\cdot|$. 
Let $f\in K[z]$ be a polynomial of degree $d>1$, and fix $m\in\bN$.

The following is a non-archimedean counterpart to Lemma \ref{th:basin}.

\begin{lemma}\label{th:basinnonarchi}
We have
\begin{gather}
 (f^n)^{(m)}=\bigl((e^{O(1)}\cdot d^n)^m+O(d^{(m-1)n})\bigr)\cdot f^n\quad\text{as }n\to\infty\tag{\ref{eq:higherorderderivative}$'$}\label{eq:higherderivativenonarch}
\end{gather}
on $\sI_\infty(f)\setminus\bigcup_{n\in\bN\cup\{0\}}f^{-n}(C(f))$
locally uniformly. Moreover, for every $a\in K$,
the family $\bigl((\log|(f^n)^{(m)}-a|)/(d^n-m)-\log\max\{1,|\cdot|\}\bigr)_n$ 
of $\delta_{\cS_{\can}}$-subharmonic functions on $\sP^1$
is locally uniformly bounded from above on $\sP^1$ and
\begin{gather}
\lim_{n\to\infty}\biggl(\frac{\log|(f^n)^{(m)}-a|}{d^n-m}-g_f\biggr)=0\tag{\ref{eq:aroundinfty}$'$}\label{eq:aroundinftynonarchi}
\end{gather}
on $\sI_\infty(f)\setminus\bigcup_{n\in\bN\cup\{0\}}f^{-n}(C(f))$ locally uniformly.
\end{lemma}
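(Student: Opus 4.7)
The approach mirrors the proof of Lemma \ref{th:basin}, substituting the non-archimedean Böttcher coordinate for the classical one, the strong triangle inequality together with a rigid-analytic Cauchy estimate for Cauchy's complex estimate, and the Berkovich maximum principle for its archimedean counterpart.

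First I would invoke the non-archimedean Böttcher coordinate of $f$ near $\infty$ (see, e.g., \cite{BR10}): for $r \gg 1$ there is an isomorphism $w = \psi(z)$ from (a neighborhood of $\infty$ containing) $\sP^1 \setminus \{g_f \le r\}$ onto $\sP^1 \setminus \{|w| \le e^r\}$ satisfying $\psi \circ f = \psi^d$, with $\psi(\infty) = \infty$ and $\psi'$ nowhere vanishing on $K \setminus \{g_f \le r\}$. The same chain-rule manipulation as in the proof of Lemma \ref{th:basin}, applied to $\psi \circ f^n = \psi^{d^n}$, then yields
\begin{gather*}
\frac{(f^n)'(z)}{f^n(z)} = d^n \cdot \bigl(1 + O(|\psi(z)|^{-d^n})\bigr) \cdot \frac{\psi'(z)}{\psi(z)} \quad\text{as } n \to \infty
\end{gather*}
uniformly on $\{g_f > r\} \cap K$; if anything, the strong triangle inequality tightens these expansions relative to their archimedean analogs. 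The induction on $m$ then proceeds identically, provided one has a non-archimedean Cauchy estimate $\sup_{B(a,\rho)} |h'| \le \rho^{-1} \sup_{B(a,\rho)} |h|$ for $h \in K[z]$ (equivalent to a Gauss-norm computation on $K$-affinoid disks) to control $\bigl((f^n)^{(m-1)}/f^n\bigr)' = (f^n)^{(m)}/f^n - (f^n)^{(m-1)}(f^n)'/(f^n)^2 = O(d^{(m-1)n})$ at the $m$-th inductive step. Restricting to a compact subset of $\sI_\infty(f) \setminus \bigcup_n f^{-n}(C(f))$ on which $|(\psi'/\psi) \circ f^M \cdot (f^M)'|$ is bounded below then gives \eqref{eq:higherderivativenonarch}, and combining with \eqref{eq:Greeniteratenonarch} yields the locally uniform convergence \eqref{eq:aroundinftynonarchi}.

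For the locally uniform upper bound on all of $\sP^1$, I would fix $R \gg 1$ so that $\bigcup_n f^{-n}(C(f) \setminus \{\infty\}) \subset \{|z| < R\}$, so that \eqref{eq:higherderivativenonarch} yields
\begin{gather*}
\frac{\log |(f^n)^{(m)} - a|}{d^n - m} \le g_f + O(1) \quad\text{as } n \to \infty
\end{gather*}
uniformly on the Berkovich circle $\{\cS \in \sP^1 : |\cS| = R\}$. Each function $(\log|(f^n)^{(m)} - a|)/(d^n - m) - \log\max\{1, |\cdot|\}$ is $\delta_{\cS_{\can}}$-subharmonic on $\sP^1$, so the Berkovich maximum principle (see \cite{BR10}) propagates the bound to all of $\sP^1$.

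The main obstacle I foresee is the bookkeeping required to translate Cauchy-type estimates between classical $K$-disks and their Berkovich sup-norm points $\cS$: once the convention is fixed that every error term in \eqref{eq:higherderivativenonarch} is interpreted via the Gauss seminorm $[\cdot]_{\cS}$ on the appropriate $K$-affinoid disk, the rest is a faithful transcription of the archimedean argument. In particular, no Siegel-disk-style complication arises here, since the lemma only concerns the asymptotic behavior on $\sI_\infty(f) \setminus \bigcup_n f^{-n}(C(f))$; the delicate analysis inside $\sK(f)$ is deferred to the proof of Theorem \ref{th:derivative} proper, where the assumption that $f$ has no potentially good reductions will be essential.
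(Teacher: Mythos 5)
Your overall strategy (non-archimedean B\"ottcher coordinate, the same induction with a rigid-analytic Cauchy estimate on small $K$-disks contained in a simple domain, then a maximum-principle argument for the global bound) is the same as the paper's, and the first two paragraphs faithfully reproduce it. The gap is in the final step. You write that each $u_n := (\log|(f^n)^{(m)}-a|)/(d^n-m) - \log\max\{1,|\cdot|\}$ is $\delta_{\cS_{\can}}$-subharmonic on $\sP^1$, so ``the Berkovich maximum principle propagates the bound to all of $\sP^1$.'' This is not a valid deduction: a $\delta_{\cS_{\can}}$-subharmonic function is \emph{not} subharmonic on any domain containing $\cS_{\can}$ (its Laplacian there carries the mass $-\delta_{\cS_{\can}}$), and in particular $u_n$ can have a strict local maximum at $\cS_{\can}$, so no maximum principle applies across it. Moreover, the set $\{\cS\in\sP^1 : |\cS|=R\}$ is not the boundary of the Berkovich disk $\{|\cS|<R\}$; in the tree topology that boundary is the single Type II point $\cS_R$ associated to $B(0,R)$.

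The paper's proof fixes exactly this. It bounds the raw subharmonic function $(\log|(f^n)^{(m)}-a|)/(d^n-m)$ (no subtraction, hence genuinely subharmonic on $\sA^1$) on $U(\overrightarrow{\cS_R 0})$ by its value at $\cS_R$, where \eqref{eq:aroundinftynonarchi} controls it; since $\log\max\{1,|\cdot|\}$ is bounded on $\overline{U(\overrightarrow{\cS_R 0})}$, this yields the bound on $u_n$ there. Separately, on $U(\overrightarrow{\cS_R\infty})$ it applies the maximum principle not to $u_n$ but to $\log|(f^n)^{(m)}/f^n|$, which \emph{is} subharmonic on $U(\overrightarrow{\cS_R\infty})$ because all zeros of $f^n$ lie in $\sK(f)\subset U(\overrightarrow{\cS_R 0})$; combined with \eqref{eq:Greeniteratenonarch} this controls $(\log|(f^n)^{(m)}|)/(d^n-m)-\log\max\{1,|\cdot|\}$ near $\infty$ where the subtraction of $\log\max\{1,|\cdot|\}$ is actually needed. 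So you need either this two-domain/two-function splitting, or at least the observation that $u_n$ is subharmonic on $\sP^1\setminus\{\cS_{\can}\}$ plus a maximum-principle application to each component of $\sP^1\setminus\{\cS_R\}$ (using the uncorrected function on the one component containing $\cS_{\can}$); as written, invoking a ``Berkovich maximum principle'' for $\delta_{\cS_{\can}}$-subharmonic functions has no content.
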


\begin{proof}
Fixing $r\gg 1$, there is a $($rigid$)$ {\em biholomorphism}
$w=\psi(z)$ from $\bP^1\setminus\{g_f\le r\}$ to $\bP^1\setminus\{|w|\le e^r\}$,
which is called a (non-archimedean) B\"ottcher coordinate
near $\infty$ associated to $f$, such that 
$\psi(f(z))=\psi(z)^d$ on $\bP^1\setminus\{g_f\le r\}$
(see Rivera-Letelier \cite[the proof of Proposition 3.3(ii)]{Juan03}).
Then 
$\psi(\infty)=\infty$ and $\psi'\neq 0$ on $\bP^1\setminus\{g_f\le r\}$.
By a computation similar to that in the proof of Lemma \ref{th:basin},
we have
\begin{gather}
\frac{(f^n)'}{f^n}(z)
=d^n\cdot\bigl(1+O(\psi(z)^{-d^n})\bigr)\frac{\psi'}{\psi}(z)\quad\text{as }n\to\infty\tag{\ref{eq:nearinfty}$'$}\label{eq:lognonarchiaroundinfty} 
\end{gather}
on $K\setminus\{g_f\le r\}$ uniformly.

For any simple domain $D\Subset I_\infty(f)\cap\sA^1$ 
and any $M\in\bN\cup\{0\}$ so large that
$f^M(D)\subset\sP^1\setminus\{g_f\le r\}$, 
from \eqref{eq:lognonarchiaroundinfty}, we also have
\begin{gather*}
\frac{(f^n)'}{f^n}
=d^{n-M}\cdot\frac{\psi'}{\psi}\circ f^M\cdot(f^M)'+o(1) 
\quad\text{as }n\to\infty
\end{gather*}
on $D\cap\bP^1$ uniformly. Fix now $m\in\bN$. Then
noting that, by the definition of a simple domain,
there is $0<\epsilon\ll 1$ such that $B(z,\epsilon)\subset D\cap\bP^1$ 
for any $z\in D\cap\bP^1$, 
an induction which is similar to that in the proof of Lemma \ref{th:basin} and
involves the almost straightforward (non-archimedean)
Cauchy's estimate for (rigid) analytic functions on those disks
$B(z,\epsilon)$ yields
\begin{gather}
\frac{(f^n)^{(m)}}{f^n}
=\biggl(d^{n-M}\cdot\frac{\psi'}{\psi}\circ f^M\cdot(f^M)'\biggr)^m
+O(d^{(m-1)n})\quad\text{as }n\to\infty\tag{\ref{eq:logarithmic}$'$}\label{eq:logarithmicnonarch}
\end{gather}
on $D\cap\bP^1$ uniformly. 
If in addition 
$D\Subset I_\infty(f)\setminus\bigcup_{n\in\bN\cup\{0\}}f^{-n}(C(f))$, 
so $\inf_{D}|\frac{\psi'}{\psi}\circ f^M\cdot(f^M)'|>0$,
then this \eqref{eq:logarithmicnonarch}
yields the asymptotic estimate 
\eqref{eq:higherderivativenonarch} on $D\cap\bP^1$ uniformly, 
and in turn on $D$ uniformly by 
the continuity of $|(f^n)^{(m)}/f^n|$ on $D$ and
the density of $\bP^1$ in $\sP^1$. 

Also fix $a\in K$. The locally uniform convergence \eqref{eq:aroundinftynonarchi}
on $\sI_\infty(f)\setminus\bigcup_{n\in\bN\cup\{0\}}f^{-n}(C(f))$ 
follows from the estimate \eqref{eq:higherderivativenonarch}. 
In particular, for $R\gg 1$,
letting $\cS_R\in[0,\infty]\setminus\bP^1$ be the point in $\sP^1\setminus\bP^1$ 
induced by 
the (constant sequence of the) $K$-closed disk $B(0,R)$ (so 
$T_{\cS_R}\sP^1\supset\{\overrightarrow{\cS_R0},\overrightarrow{\cS_R\infty}\}$),
we have the convergence \eqref{eq:aroundinftynonarchi} at $\cS=\cS_R$,
and in turn, by the maximum principle for subharmonic functions 
(cf.\ \cite[Proposition 8.14]{BR10}), the family 
$\bigl(\log|(f^n)^{(m)}-a|/(d^n-m)\bigr)_n$ 
is uniformly bounded from above 
on $U(\overrightarrow{\cS_R0})$ (whose boundary is $\{\cS_R\}$). Similarly, for $R\gg 1$, noting that
$\log\bigl|(f^n)^{(m)}/f^n\bigr|$ is a subharmonic function 
on $U(\overrightarrow{\cS_R\infty})$
(whose boundary is $\{\cS_R\}$), 
by the maximum principle for subharmonic functions
(and \eqref{eq:Greeniteratenonarch}), we have
\begin{align*}
 \frac{\log|(f^n)^{(m)}|}{d^n-m}-\log\max\{1,|\cdot|\}
\le&\biggl(\frac{\log|f^n|}{d^n-m}+O(nd^{-n})\biggr)-\log\max\{1,|\cdot|\}\\
=&g_f-\log\max\{1,|\cdot|\}+O(nd^{-n})=O(nd^{-n})\quad\text{as }n\to\infty
\end{align*}
on $U(\overrightarrow{\cS_R\infty})$ uniformly.
Hence
the family $\bigl((\log|(f^n)^{(m)}-a|)/(d^n-m)-\log\max\{1,|\cdot|\}\bigr)_n$ 
is locally uniformly bounded from above on $\sP^1$.
\end{proof}

Fix also $a\in K$. 
By the second and the last assertions in Lemma \ref{th:basinnonarchi},
a {\em compactness principle} for a family of $\delta_{\cS_{\can}}$-subharmonic
functions on $\sP^1$
(cf.\ \cite[Proposition 2.18]{FRL}, \cite[Proposition 8.57]{BR10}) yields 
a sequence $(n_j)$ in $\bN$ tending to $\infty$ as $j\to\infty$ and
a function $\phi:\sP^1\to\bR\cup\{-\infty\}$ such that
\begin{multline*}
 \phi=
\lim_{j\to\infty}\biggl(\frac{\log|(f^{n_j})^{(m)}-a|}{d^{n_j}-m}-g_f\biggr)\\
\Biggl(=\lim_{j\to\infty}\biggl(\Bigl(\frac{\log|(f^{n_j})^{(m)}-a|}{d^{n_j}-m}-\log\max\{1,|\cdot|\}\Bigr)-(g_f-\log\max\{1,|\cdot|\})\biggr)\Biggr)\quad\text{on }\sP^1\setminus\bP^1
\end{multline*}
and that 
\begin{gather*}
 \Delta\phi+\mu_f\bigl(=\Delta(\phi+g_f-\log\max\{1,|\cdot|\})+\delta_{\cS_{\can}}
=\Delta(\phi+g_f)+\delta_\infty\bigr) 
\end{gather*}
is a {\em probability} Radon measure on $\sP^1$.
By \eqref{eq:aroundinftynonarchi}, 
we have $\phi\equiv 0$ on $\sI_\infty(f)\setminus\bP^1$, and in turn
$\phi\equiv 0$ on $\sI_\infty(f)$
by the subharmonicity of $\phi=(\phi+g_f)-g_f$ on $\sI_\infty(f)\cap\sA^1$
and the maximum principle for subharmonic functions
(cf.\ \cite[Proposition 8.14]{BR10}).
Then also by $\sI_\infty(f)=\{g_f>0\}$,
the subharmonicity of $\phi+g_f$ on $\sA^1$, and
the maximum principle for subharmonic functions again,
we have $\phi+g_f\le\max_{\{g_f=\epsilon\}}(\phi+g_f)=0+\epsilon=\epsilon$ 
on $\sK(f)=\{g_f=0\}\subset\{g_f<\epsilon\}$ for every $\epsilon>0$,
so that $\phi(=\phi+g_f)\le 0$ on $\sK(f)$.
Then we also have 
$\phi\equiv 0$ on $\sJ(f)=\partial\sK(f)$. 

Let us see that
\begin{gather}
 \limsup_{n\to\infty}\frac{\int_{\sP^1}\log|(f^n)^{(m)}-a|\mu_f}{d^{n}-m}\le 0, 
\label{eq:localheight}
\end{gather}
which will be used in the proof of Theorem \ref{th:adelicequidist};
indeed, 
\begin{gather*}
\limsup_{j\to\infty}\frac{\int_{\sP^1}\log|(f^{n_j})^{(m)}-a|\mu_f}{d^{n_j}-m}
\le\limsup_{j\to\infty}\sup_{\sJ(f)}\frac{\log|(f^{n_j})^{(m)}-a|}{d^{n_j}-m}
\le\sup_{\sJ(f)}(\phi+g_f)=0,
\end{gather*}
where the first inequality is by $\supp\mu_f=:\sJ(f)$, and
the second one is by a version of Hartogs's lemma for a sequence 
of $\delta_{\cS_{\can}}$-subharmonic functions on $\sP^1$
(cf.\ \cite[Proposition 2.18]{FRL}, \cite[Proposition 8.57]{BR10}).

\begin{proof}[Proof of Theorem $\ref{th:derivative}$]
 Let us continue the above argument.
Suppose now that the open subset $\{\phi<0\}$ is non-empty. 
Then since $\phi\equiv 0$ on $\sI_{\infty}(f)$,
there is a Berkovich Fatou component $U$ of $f$ other than $\sI_\infty(f)$
(so $U\Subset\sA^1$)
such that $U\cap\{\phi<0\}\neq\emptyset$, 
and $\partial U$ is a singleton, say $\{\cS_0\}$,
in $\sP^1\setminus\bP^1$ (see \cite[Lemma 2.1]{Okuapripri}).
Moreover, $\phi\equiv 0$ on $\partial U\subset\sJ(f)$. 

Assume in addition that $f$ has no potentially good reductions. Then 
in particular, $\mu_f(\partial U)(=\mu_f(\{\cS_0\}))=0$. 
Now setting
\begin{gather*}
 \psi:=\begin{cases}
	\phi & \text{on }U\\
	0 & \text{on }\sP^1\setminus U
       \end{cases}:\sP^1\to\bR_{\le 0}\cup\{-\infty\}
\end{gather*}
and checking that the function $\psi+g_f$ is subharmonic on $\sA^1$,
we conclude $\psi\equiv 0$ on $\sP^1$ 
by an argument similar to that in 
\cite[Proof of Theorem 1]{Okuapripri}
involving a Bedford-Taylor-type {\em domination principle}
(see \cite[\S 4]{Okuapripri}).
This contradicts $U\cap\{\phi<0\}\neq\emptyset$. 

Hence $\phi\equiv 0$ on $\sP^1$
under the no potentially good reductions condition on $f$.
Then \eqref{eq:derivatives} follows from the equality
\begin{gather*}
 \Delta\Bigl(\frac{\log|(f^n)^{(m)}-a|}{d^n-1}-g_f\Bigr)=
\frac{\bigl((f^n)^{(m)}\bigr)^*\delta_a}{d^n-1}-\mu_f\quad\text{on }\sP^1
\end{gather*}
and a continuity of the Laplacian $\Delta$.
\end{proof}

\begin{proof}[Proof of Theorem $\ref{th:adelicequidist}$]
Let $k$ be a product formula field of characteristic $0$ 
and let $f\in k[z]$ be a polynomial of degree $d>1$. Recall that,
writing $f(z)$ as $\sum_{j=0}^dc_jz^j\in k[z]$, so $c_d\in k^*$, there is
a finite subset $E_f$ in $M_k$ containing all the infinite places of $k$ such that
for every $v\in M_k\setminus E_f$, 
\begin{gather*}
 |c_d|_v=1,\quad |c_0|_v,|c_1|_v,\ldots,|c_{d-1}|_v\le 1 
\end{gather*}
and moreover,
$g_{f,v}=\log\max\{1,|\cdot|_v\}$ and $\mu_{f,v}=\delta_{\cS_{\can,v}}$
on $\sP^1(\bC_v)$, regarding $f\in\bC_v[z]$. 

Fix $m\in\bN$ and $a\in k$. For every $n\in\bN$,
$(f^n)^{(m)}\in(\bZ[c_0,\ldots,c_d])[z]$ by induction. By the product
formula property of $k$, there is
an at most finite (and possibly empty) subset $E_a$ in $M_k$ 
such that for every $v\in M_k\setminus E_a$, $|a|_v\in\{0,1\}$. 
Then for every $n\in\bN$ and every $v\in M_k\setminus(E_f\cup E_a)$,
we have 
\begin{multline*}
\int_{\sP^1(\bC_v)}\log|(f^n)^{(m)}-a|_v\mu_{f,v}
\le \int_{\sP^1(\bC_v)}\log \max\{|(f^n)^{(m)}|_v,|a|_v\}\delta_{\cS_{\can,v}}\\
=\log\max\Bigl\{\sup_{z\in\cO_{\bC_v}}|(f^n)^{(m)}(z)|_v,|a|_v\Bigr\}
\le\log\max\{|c_0|_v,\ldots,|c_d|_v,|a|_v\}=\log 1=0
\end{multline*}
(see \eqref{eq:seminorm} and \eqref{eq:action} for the first equality),
which with the second assertions in Lemmas \ref{th:basinnonarchi} and \ref{th:basin}
(for finite and infinite $v\in M_k$, respectively) 
implies that
\begin{gather*}
\sup_{v\in M_k}\sup_{n\in\bN}N_v
\frac{\int_{\sP^1(\bC_v)}\log|(f^n)^{(m)}-a|_v\mu_{f,v}}{d^n-m}
<\infty.
\end{gather*}
Now by the Mahler-type formula \eqref{eq:Mahler}, Fatou's lemma,
and \eqref{eq:localheight}, we have
\begin{gather*}
\limsup_{n\to\infty}\hat{h}_f([(f^n)^{(m)}=a])
\le\sum_{v\in M_k}\limsup_{n\to\infty}N_v\frac{\int_{\sP^1(\bC_v)}\log|(f^n)^{(m)}-a|_v\mu_{f,v}}{d^n-m}\le 0,
\end{gather*}
which with the non-negativity \eqref{eq:height} of $\hat{h}_f$ yields
the {\em small $(g_{f,v})_{v\in M_k}$-heights property} \eqref{eq:heightvanishing} 
of the sequence $([(f^n)^{(m)}=a])_n$ of effective $k$-divisors 
on $\bP^1(\overline{k})$. 

We note that $\deg[(f^n)^{(m)}=a]=d^n-m\to\infty$ as $n\to\infty$
and that, whenever $v\in M_k$ is infinite, we have $\bC_v\cong\bC$.
Suppose now that $k$ is a number field and 
that $a\in k^*$, and
choose an infinite place $v\in M_k$ of $k$.
Then from the equidistribution \eqref{eq:equidistderivative}
of $(((f^n)^{(m)})^*\delta_a/(d^n-m))_n$ towards $\mu_{f,v}$, 
which has no atoms, on $\sP^1(\bC_v)\cong\bP^1(\bC)$, 
we have
$\sup_{w\in\bP^1(\overline{k}):(f^n)^{(m)}(w)=a}\deg_w((f^n)^{(m)})
=o((\deg[(f^n)^{(m)}=a]))$ as $n\to\infty$, so in particular
the {\em small diagonal property} 
\begin{gather*}
\sum_{w\in\bP^1(\overline{k}):(f^n)^{(m)}(w)=a}\bigl(\deg_w((f^n)^{(m)})\bigr)^2
=o\bigl((\deg[(f^n)^{(m)}=a])^2\bigr)\quad\text{as }n\to\infty
\end{gather*}
of $([(f^n)^{(m)}=a])_n$. Now the uniform 
{\em asymptotically $(g_{f,v})_{v\in M_k}$-Fekete configuration property}
\eqref{eq:Fekete} 
of $([(f^n)^{(m)}=a])_n$ holds (see \cite[Theorem 1]{okuadelically}), 
so in particular the adelic equidistribution \eqref{eq:adelicequidist} holds.
\end{proof}

\section{Proof of Theorem \ref{th:eigenvalue_automorphism}}
\label{sec:henon}

Let us first show a slightly more general equidistribution 
statement \eqref{eq:normalized} under the following
normalization \eqref{eq:indeterminancies} below.
Let $f$ be a H\'enon-type polynomial automorphism of $\C^2$ of degree $d>1$ 
normalized as
\begin{gather}
I^+=\{[0:0:1]\}\quad\text{and}\quad I^-=\{[0:1:0]\}.\label{eq:indeterminancies} 
\end{gather}
Then the function
\begin{gather*}
 (z,w)\mapsto g^+(z,w)-\log\max\{1,|z|\}\quad\text{on }\bC^2
\end{gather*}
extends pluriharmonically to an open neighborhood
of $L_\infty\setminus  I^+$ in $\bP^2$ (\cite[Theorem 6.1]{DS}).
Moreover, for every $n\in\bN$, writing $f^n$ as
\begin{gather*}
f^n=(P_n,Q_n)\in(\C[z,w])^2, 
\end{gather*}
we have $\deg P_n =\deg_z P_n =d^n >\deg Q_n$ 
(\cite[Proposition 5.11]{DS}), 
and then
\begin{gather}
0< g^+=d^{-n}\log |P_n| + O(d^{-n})
\quad\text{and}\quad  Q_n = o(P_n)
\quad\text{as }n\to\infty  \label{estimate_on_B} 
\end{gather}
on $B^+\cap\bC^2$ locally uniformly, recalling also that
$\lim_{n\to\infty}f^n=[0:1:0]$ on $B^+$ locally uniformly. 

Fix a $2\times 2$ matrix
$A=\begin{pmatrix}
a_1 & a_2\\
a_3 & a_4
\end{pmatrix}\in \mathrm{M}(2,\C)$
satisfying the condition 
\begin{gather}
a_4\neq 0,\label{eq:nonvanish}
\end{gather}
so that for every $n\in\bN$, 
\begin{align}
\notag \det(D(f^n)- A)
=&J_{f^n} -a_1 \partial_w Q_n - a_4 	\partial_z P_n + a_3 \partial_w P_n  + a_2 	\partial_z Q_n +\det A\\
=&-a_1 \partial_w Q_n - a_4 	\partial_z P_n + a_3 \partial_w P_n  + a_2 	\partial_z Q_n +J_f^n+\det A
\in\bC[z,w]\label{eq:determinant}
\end{align}
is indeed of degree $d^n-1$. 

\begin{lemma}
 For each $j\in\{z,w\}$,
 \begin{gather}
  \partial_j P_n=2d^nP_n\partial_j g^+ +O(1)
  \quad\text{and}\quad \partial_j Q_n = o(d^nP_n)
  \quad\text{as }n\to\infty
  \label{eq:partialQ}    
 \end{gather}
 on $B^+\cap\bC^2$ locally uniformly. 
\end{lemma}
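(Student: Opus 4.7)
The plan is to exploit the pluriharmonic extension of
$(z,w)\mapsto g^+(z,w)-\log\max\{1,|z|\}$ across $L_\infty\setminus I^+$,
combined with the functional equation $g^+\circ f^n=d^ng^+$ and the
convergence $f^n\to I^-$ locally uniformly on $B^+$. Working in the
affine chart $(\tilde z,\tilde w)=(1/z,w/z)$ of $\bP^2$ in which
$I^-=(0,0)$, this pluriharmonic extension amounts to the identity
\begin{equation*}
g^+(z,w)=\log|z|+G(1/z,\,w/z)
\end{equation*}
for $|z|$ large, with $G$ pluriharmonic on a neighborhood of $(0,0)$.

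Applying this identity at the point $f^n(z,w)=(P_n,Q_n)(z,w)$, which by
\eqref{estimate_on_B} tends to $I^-$ locally uniformly on $B^+\cap\bC^2$,
the functional equation $g^+\circ f^n=d^ng^+$ yields
\begin{equation*}
d^n\,g^+=\log|P_n|+\psi_n,\qquad
\psi_n:=G\bigl(1/P_n,\,Q_n/P_n\bigr),
\end{equation*}
on $B^+\cap\bC^2$. On any compact $K\Subset B^+\cap\bC^2$, \eqref{estimate_on_B}
combined with $\inf_K g^+>0$ forces $P_n$ to be zero-free on $K$ for $n$
large, so $\psi_n$ is pluriharmonic there; and since $(1/P_n,Q_n/P_n)\to(0,0)$
uniformly on $K$ while $G$ is continuous at $(0,0)$, the sequence $(\psi_n)$
converges uniformly on $K$ to the constant $G(0,0)$.

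Applying the Wirtinger derivative $\partial_j$ to the preceding display gives
\begin{equation*}
d^n\,\partial_j g^+=\frac{\partial_j P_n}{2P_n}+\partial_j\psi_n,
\end{equation*}
which rearranges to
$\partial_j P_n=2d^nP_n\,\partial_j g^+-2P_n\,\partial_j\psi_n$. Because
$(\psi_n)$ is a locally uniformly bounded family of pluriharmonic functions
converging locally uniformly to a constant, the standard gradient estimate
for bounded harmonic functions (via the Poisson kernel, applied to local
holomorphic primitives) gives $\partial_j\psi_n\to 0$ locally uniformly on
$B^+\cap\bC^2$, yielding the first assertion of \eqref{eq:partialQ} with
the error term $-2P_n\partial_j\psi_n$ of order $o(d^nP_n)$, negligible
against the leading $2d^nP_n\partial_jg^+$.

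For the estimate on $\partial_j Q_n$, I would use that
$Q_n/P_n=\tilde w\circ f^n$ is holomorphic on $\{P_n\neq 0\}$ and tends
to $0$ locally uniformly on $B^+\cap\bC^2$ by \eqref{estimate_on_B};
Cauchy's estimates then give $\partial_j(Q_n/P_n)\to 0$ locally uniformly.
Writing
$\partial_j Q_n=\partial_j(Q_n/P_n)\cdot P_n+(Q_n/P_n)\cdot\partial_j P_n$
and combining with the first part together with $\partial_j P_n=O(d^nP_n)$
gives $\partial_j Q_n=o(P_n)+o(1)\cdot O(d^nP_n)=o(d^nP_n)$, as claimed.
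The delicate step is the first: leveraging a pluriharmonic extension
valid only near $L_\infty\setminus I^+$ to produce a useful identity
throughout $B^+\cap\bC^2$; this is achieved by applying the extension at
$f^n(z,w)$ rather than at $(z,w)$ itself and then pulling back via
$g^+\circ f^n=d^ng^+$, using that $f^n$ maps compact subsets of $B^+$
into arbitrarily small neighborhoods of $I^-$ in $\bP^2$.
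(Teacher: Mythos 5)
Your argument is correct and matches the paper's approach in essence: both proofs rest on the pluriharmonicity of the error $g^+-d^{-n}\log|P_n|$ on compacts of $B^+\cap\bC^2$ together with a gradient estimate for uniformly bounded (pluri)harmonic functions, and both use Cauchy estimates on the holomorphic function $Q_n/P_n$ for the latter half. You make the error explicit as $d^{-n}G(1/P_n,Q_n/P_n)$ by pulling the pluriharmonic extension of $g^+-\log\max\{1,|z|\}$ at $I^-$ back through $f^n$ and the functional equation $g^+\circ f^n=d^n g^+$, whereas the paper applies the Poisson gradient estimate directly to $g^+-d^{-n}\log|P_n|$ via \eqref{estimate_on_B}; either way the resulting error in the first formula is of order $o(P_n)$ rather than the stated $O(1)$, which is all that is needed downstream.
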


\begin{proof}
 Pick any open concentric bidisks $D\Subset D'\Subset B^+\cap\bC^2$, 
 and fix $j\in\{z,w\}$. Let us write $D,D'$ as
 $D_1\times D_2,D_1'\times D_2'$, respectively. 
 
 By the former half in \eqref{estimate_on_B},
 we have $\inf_{D'}|P_n|>0$ if $n\gg 1$. We claim that
 \begin{gather}
  \partial_j g^+ 
  = d^{-n} \partial_j \log |P_n|+ O(d^{-n}) 
  =  \frac{1}{d^n}  \frac{\partial_j P_n}{2P_n}+ O(d^{-n})
  \quad\text{as }n\to\infty\label{eq:poisson}
 \end{gather}
 on $\overline{D}$ uniformly; indeed, 
 for every $z\in\overline{D_1}$, using Poisson's integral 
 of the function $w\mapsto g^+(z,w)-d^{-n}\log |P_n(z,w)|$ on $\partial D_2'$,
 the former half in \eqref{estimate_on_B} yields
 the asymptotic estimate \eqref{eq:poisson} on $\{z\}\times\overline{D_2}$ uniformly,
 and moreover, the implicit constant in $O$ depends only on $D$. 
 Hence the claim holds. 
 In particular, the former half in \eqref{eq:partialQ} holds.
 
 Similarly, using the latter half in \eqref{estimate_on_B} twice and 
 Cauchy's integral of the function $Q_n/P_n$ on 
 $\partial {D_1'}\times\partial {D_2'}$, 
 we also have
 \begin{gather*}
  \frac{\partial_j Q_n}{P_n}=\frac{Q_n \partial_j P_n}{P_n^2}
  +\partial_j \left(\frac{Q_n}{P_n}\right) = 
  o(1)\cdot\frac{\partial_j P_n}{P_n}+o(1)\quad\text{as }n\to\infty
 \end{gather*}
 on $\overline{D}$ uniformly, which together with 
 \eqref{eq:poisson} and $\sup_D|\partial_j g^+|<\infty$
 yields
 \begin{gather*}
  \frac{\partial_j Q_n}{P_n} =o(d^n)+o(1)=o(d^n)\quad\text{as }n\to\infty
 \end{gather*}
on $\overline{D}$ uniformly. Hence the latter half in \eqref{eq:partialQ} also holds.
\end{proof}

By the pluriharmonicity of $g^+$ on $B^+$, the function
$a_4\partial_z g^+-a_3 \partial_w g^+$ is holomorphic on $B^+\cap\bC^2$. Set
\begin{gather*}
Y:=\bigl\{(z,w)\in B^+\cap\bC^2:\bigl(a_4\partial_z g^+ - a_3 \partial_w g^+\bigr)(z,w) =0\bigr\}. 
\end{gather*}
Recall the assumption that $a_4\neq 0$.

\begin{lemma}\label{th:nonvanish}
 $Y$ is an analytic hypersurface in $B^+\cap\bC^2$, 
 no irreducible component
 of which is horizontal, i.e., $\{w=w_0\}$ for some $w_0\in\bC$.
\end{lemma}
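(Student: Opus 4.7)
The plan is to reduce both assertions of the lemma to a single sharp asymptotic for the holomorphic function $h := a_4 \partial_z g^+ - a_3 \partial_w g^+$ on $B^+\cap\bC^2$, namely $2z\cdot h(z,w)\to a_4$ as $z\to\infty$, uniformly for $w$ in any bounded subset of $\bC$. Since $a_4\neq 0$, this forces $h(z,w)\neq 0$ whenever $|z|$ is large enough with $w$ bounded, and both conclusions follow: $h$ is not identically zero on $B^+\cap\bC^2$, so $Y=\{h=0\}$ is a proper analytic subset, hence an analytic hypersurface. For the "no horizontal component" half I use one auxiliary observation: for every $w_0\in\bC$ the horizontal slice $\{w=w_0\}\cap B^+\cap\bC^2$ has no bounded connected component, because on any such bounded component the positive harmonic function $g^+(\cdot,w_0)$ would vanish on its boundary (contained in $K^+$), contradicting the maximum principle. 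So any horizontal irreducible component of $Y$ would be a component of such a slice, hence unbounded in $z$, and the asymptotic would be contradicted there.

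To obtain the asymptotic I pass to the local chart $(u,v)=(1/z,\,w/z)$ adapted to $L_\infty\setminus I^+=\{u=0,\,v\in\bC\}$. By the pluriharmonic extension of $g^+-\log\max\{1,|z|\}$ across $L_\infty\setminus I^+$, the function $\tilde h(u,v):=g^+(1/u,v/u)+\log|u|$ is pluriharmonic on a neighborhood of $\{u=0\}$; near $(0,0)$ I choose a holomorphic primitive $H$ so that $\tilde h=\operatorname{Re}(H)$. A direct chain-rule computation gives $\partial_z g^+=\tfrac{1}{2z}-\tfrac{H_u+wH_v}{2z^2}$ and $\partial_w g^+=\tfrac{H_v}{2z}$ (with $H_u,H_v$ evaluated at $(1/z,w/z)$), whence
\begin{align*}
2z\cdot h(z,w) &= a_4 - a_3 H_v(1/z,w/z) - \tfrac{a_4(H_u+wH_v)(1/z,w/z)}{z}\\
&\longrightarrow a_4 - a_3 H_v(0,0)\quad\text{as }z\to\infty,\ w\text{ bounded.}
\end{align*}

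The main obstacle is then to show $H_v(0,0)=0$, equivalently that $\tilde h(0,v)$ is constant in $v$. This is where the H\'enon-type hypothesis enters, in the form $I^-=f(L_\infty\setminus I^+)$: setting $\tilde P_1(u,v):=u^d P_1(1/u,v/u)\in\bC[u,v]$ and $\tilde Q_1(u,v):=u^d Q_1(1/u,v/u)\in\bC[u,v]$, the automorphism $f$ in the $(u,v)$ chart takes the form $(u,v)\mapsto\bigl(u^d/\tilde P_1(u,v),\,\tilde Q_1(u,v)/\tilde P_1(u,v)\bigr)$, and the collapse of $L_\infty\setminus I^+$ to $I^-$ forces $\tilde P_1(0,\cdot)$ to be a polynomial nowhere vanishing on $\bC$, hence a constant $p_d$, together with $\tilde Q_1(0,\cdot)\equiv 0$. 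Substituting $(z,w)=(1/\epsilon,v/\epsilon)$ into the functional equation $g^+\circ f=d\cdot g^+$ and letting $\epsilon\to 0$ yields $\log|p_d|+\tilde h(0,0)=d\cdot\tilde h(0,v)$, whose left-hand side is independent of $v$. So $\tilde h(0,v)$ is constant in $v$, hence the holomorphic function $H(0,\cdot)$ has constant real part and is therefore itself constant, giving $H_v(0,0)=0$. Feeding this back into the previous display produces $2z\cdot h(z,w)\to a_4\neq 0$, and both parts of the lemma follow.
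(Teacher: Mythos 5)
Your proof is correct, and it takes a genuinely different and more quantitative route than the paper's. The paper argues qualitatively: since $a_4\partial_z g^+ - a_3\partial_w g^+$ is (up to the factor $a_4$) the derivative of the pluriharmonic $g^+$ along the direction $(a_4,-a_3)$, the hypothesis $Y=B^+\cap\bC^2$ would force $g^+$ to be constant along the affine line $w=-(a_3/a_4)z$ inside $B^+$, yet this line meets $L_\infty$ at $[0:1:-a_3/a_4]\in L_\infty\setminus I^+$, where $g^+=\log|z|+O(1)\to\infty$; the paper then disposes of horizontal components ``similarly.'' You instead establish the sharp asymptotic $2z\,h(z,w)\to a_4\neq 0$ as $z\to\infty$ with $w$ bounded, which kills both assertions at once (after your maximum-principle observation that every horizontal slice of $B^+\cap\bC^2$ is unbounded in $z$). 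The extra price is showing $H_v(0,0)=0$, i.e.\ that the boundary value of the extended $g^+-\log|z|$ is constant on $L_\infty\setminus I^+$; you obtain this from the functional equation $g^+\circ f=d\,g^+$ combined with the H\'enon-type structure $\tilde P_1(0,\cdot)\equiv p_d\neq 0$, $\tilde Q_1(0,\cdot)\equiv 0$ at $L_\infty\setminus I^+$. That step is worth having made explicit: for the second assertion with $a_3\neq 0$, the directional-derivative trick does not transfer to horizontal lines (vanishing of $h$ on $\{w=w_0\}$ does not make $g^+(\cdot,w_0)$ constant), and without $H_v(0,0)=0$ the leading coefficient of $h$ along $\{w=w_0\}$ is $a_4-a_3H_v(0,0)$, which is not \emph{a priori} nonzero. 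Your stronger output --- that $Y$ eventually misses any horizontal strip $\{|w|\le M,\ |z|\ge R(M)\}$ --- is also precisely what is used in the next lemma of the paper to place the torus $\{|z-z_0|=r\}\times\{|w-w_0|=\epsilon\}$ inside $B^+\setminus Y$, so your more computational route buys something concrete downstream, at the cost of invoking the dynamics (the functional equation) where the paper's first half uses only potential theory.
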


\begin{proof}
	Let us first show that $Y$ is not equal to $B^+ \cap \bC^2$. 
	Suppose to the contrary that $a_4\partial_z g^+- a_3 \partial_w g^+\equiv 0$ 
	on $B^+\cap\bC^2$.
	Then letting $L$ be the complex affine line $w=-(a_3/a_4)z$ in $\C^2$,
	there is $c\in\bR$ such that $g^+\equiv c$ on $L\cap B^+$. On the other hand,
	since the projective line $\overline{L}$ in $\bP^2$
	intersects $L_\infty$ at $[0:1:-a_3/a_4]\in L_\infty\setminus I^+$, 
	near which $g^+(z,w)-\log\max\{1,|z|\}$ extends pluriharmonically,
	we must have $c=g^+(z,w)=\log\max\{1,|z|\}+O(1)\to\infty$ as 
	$L\cap B^+\ni (z,w)\to[0:1:-a_3/a_4]$. This is a contradiction.
	Hence the former assertion holds.
	
	The latter assertion is shown similarly
	noting that the closure of any horizontal line intersects $L_\infty$
	at $[0:1:0]\in L_\infty\setminus  I^+$.
\end{proof}

Recall the computation \eqref{eq:determinant} 
of the polynomial $\det(D(f^n)- A)\in\bC[z,w]$ of degree $d^n-1$.
For every $n\in\bN$, set
\begin{gather*}
\phi_n=\phi_n[A]:=\frac{\log|\det(D(f^n)- A)|}{d^n-1},
\end{gather*}
which is a plurisubharmonic function on $\bC^2$ and satisfies
$\rd\rd^c\phi_n=[\det(D(f^n)- A)]/(d^n-1)$ as 
currents on $\bC^2$ by the Poincar\'e-Lelong formula.

\begin{lemma}\label{th:unifconv}
	We have $\phi_n=g^+ +O(nd^{-n})$ as $n\to\infty$
	on $B^+\cap(\bC^2\setminus Y)$ locally uniformly.
	Moreover, the family $(\phi_n)_n$ 
	is locally uniformly bounded from above on $\bC^2$.
\end{lemma}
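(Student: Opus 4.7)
The plan is to substitute the asymptotics from \eqref{eq:partialQ} into the explicit expression \eqref{eq:determinant} for $\det(D(f^n)-A)$ and take logarithms. Plugging in $\partial_j P_n = 2d^n P_n\,\partial_j g^+ + O(1)$ and $\partial_j Q_n = o(d^n P_n)$ yields, locally uniformly on $B^+\cap\bC^2$ as $n\to\infty$,
\begin{equation*}
\det(D(f^n)-A) \;=\; -2d^n P_n\bigl(a_4\partial_z g^+ - a_3\partial_w g^+\bigr)\;+\;o(d^n P_n)\;+\;O(|J_f|^n+1).
\end{equation*}
On any compact $K\Subset B^+\cap\bC^2$ one has $g^+\ge\alpha>0$, so by the former half of \eqref{estimate_on_B} the quantity $|P_n|$ is bounded below by $e^{d^n\alpha/2}$ for $n\gg 1$; consequently $d^n|P_n|$ doubly-exponentially dominates $|J_f|^n+1$, and that error is absorbed into the $o(d^nP_n)$ remainder.

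For the first assertion, I would restrict to a compact $K\Subset B^+\cap(\bC^2\setminus Y)$, on which $|a_4\partial_z g^+ - a_3\partial_w g^+|$ is uniformly bounded away from $0$ by Lemma \ref{th:nonvanish}. Then the leading term strictly dominates the remainder on $K$, so
\begin{equation*}
\log\bigl|\det(D(f^n)-A)\bigr| \;=\; \log|P_n|\;+\;n\log(2d)\;+\;\log\bigl|a_4\partial_z g^+ - a_3\partial_w g^+\bigr|\;+\;o(1).
\end{equation*}
Using $\log|P_n|=d^n g^+ + O(1)$ from \eqref{estimate_on_B} and dividing by $d^n-1$ yields $\phi_n = g^+ + O(n/d^n)$ uniformly on $K$.

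For the second assertion, the key observation is that the \emph{upper} bound does not require $K$ to avoid $Y$: from the same expansion, on every compact $K\Subset B^+\cap\bC^2$ one still has $|\det(D(f^n)-A)|\le C_K\cdot d^n|P_n|$, hence $\phi_n\le g^+ + O(n/d^n)$ on $K$. To extend the upper bound across $K^+$, I exploit that $K^+$ is a \emph{bounded} subset of $\bC^2$: given a compact $K\subset\bC^2$, fix a bidisk $\bD\Subset\bC^2$ with $K\cup K^+\Subset\bD$. Its distinguished boundary $\Gamma$ then lies in $\bC^2\setminus K^+\subset B^+$, and is a compact subset of $B^+\cap\bC^2$ on which $(\phi_n)$ is uniformly bounded from above by the preceding step. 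Applying the maximum principle on the bidisk to the plurisubharmonic function $\phi_n$ (whose maximum on $\overline{\bD}$ is attained on $\Gamma$) gives $\sup_K\phi_n\le\sup_\Gamma\phi_n\le\mathrm{const}$.

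The main technical hinge is the absorption of the constant-exponential contributions $J_f^n$ and $\det A$ into the error: this rests squarely on the super-exponential lower bound $|P_n|\gtrsim e^{d^n g^+}$ on compacts of $B^+$ from \eqref{estimate_on_B}. The role of the normalization $a_4\neq 0$ in \eqref{eq:nonvanish} is only felt through Lemma \ref{th:nonvanish}, which guarantees the coefficient $a_4\partial_z g^+-a_3\partial_w g^+$ of the leading term is not identically zero, so that $Y$ is a proper analytic subset of $B^+\cap\bC^2$ and the locally uniform statement off $Y$ is non-vacuous.
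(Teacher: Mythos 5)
Your treatment of the first assertion matches the paper's argument: substitute \eqref{eq:partialQ} into \eqref{eq:determinant}, use the super-exponential lower bound $|P_n|\gtrsim e^{d^n g^+}$ on compacts of $B^+$ to absorb $J_f^n$ and $\det A$, and use Lemma \ref{th:nonvanish} to keep $a_4\partial_z g^+-a_3\partial_w g^+$ bounded away from zero off $Y$. (Small slip: $\log(2d^n)=\log 2+n\log d$, not $n\log(2d)$, but both are $O(n)$ so nothing is harmed.)

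The second assertion, however, rests on the claim that \emph{$K^+$ is a bounded subset of $\bC^2$}, and this is false. The paper itself records that $\overline{K^+}=K^+\cup I^+$ in $\bP^2$, which says precisely that $K^+$ accumulates at $I^+\in L_\infty$, hence is unbounded in $\bC^2$ (stable manifolds of saddle points already sit inside $K^+$ and go to infinity toward $I^+$). Consequently there is no bidisk $\bD\Subset\bC^2$ with $K^+\Subset\bD$, and your proposed distinguished boundary need not lie in $B^+$. Your intermediate observation — that the upper bound $\phi_n\le g^++O(nd^{-n})$ holds on compacts of $B^+\cap\bC^2$ without avoiding $Y$ — is correct and a genuine (minor) simplification over the paper, but the maximum-principle step still needs a distinguished boundary contained in $B^+$. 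The paper achieves this with an \emph{asymmetric} bidisk $\{|z-z_0|\le r\}\times\{|w-w_0|\le\epsilon\}$ with $r\gg 1$ and $\epsilon\ll 1$: for $r$ large this distinguished boundary clusters near $L_\infty\setminus I^+$, which is contained in $B^+$, so it lies in $B^+\cap\bC^2$; the paper additionally invokes the no-horizontal-component conclusion of Lemma \ref{th:nonvanish} to keep it off $Y$, a step your stronger upper bound lets you skip. With that replacement your argument goes through, but as written the step ``fix a bidisk containing $K\cup K^+$'' is a gap.
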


\begin{proof}
 First, pick any open bidisk $D\Subset B^+\cap(\bC^2\setminus Y)$. Then
 by \eqref{eq:partialQ} and the former half in \eqref{estimate_on_B}, we have
 \begin{gather*}
  a_1 \partial_w Q_n + a_4\partial_z P_n - a_3 \partial_w P_n  - a_2\partial_z Q_n 
  = 2d^n P_n\cdot\bigl(a_4\partial_z g^+ -a_3 \partial_w g^+ + o(1)\bigr)
  \quad\text{as }n\to\infty 
 \end{gather*}
 on $\overline{D}$ uniformly, and 
 then using the former half in \eqref{estimate_on_B} again
 and $D\Subset B^+\cap(\bC^2\setminus Y)$, we have
\begin{align*} 
  \phi_n =&\frac{1}{d^n-1}\biggl(\log |P_n|
  +\log \left|2d^n \bigl(a_4\partial_z g^+ -a_3\partial_w g^+ + o(1)\bigr)-\frac{J_{f}^n +\det A}{P_n}\right|\biggr)\\
 =&\frac{1}{d^n-1} \log |P_n| +O(nd^{-n})=g^+ +O(nd^{-n})\quad\text{as }n\to\infty   
\end{align*}
 on $\overline{D}$ uniformly. Hence the former assertion holds. 
 
 Fix $(z_0,w_0)\in\bC^2$. By $L_\infty\setminus  I^+\subset B_+$ and
 the latter half in Lemma \ref{th:nonvanish}, 
 we have $\{|z-z_0|=r\}\times\{|w-w_0|=\epsilon\}\subset B^+\cap(\bC^2\setminus Y)$
 for $r\gg 1$ and $0<\epsilon\ll 1$, 
 so that by the former assertion and the maximum principle
 for the plurisubharmonic function $\phi_n$ on $\bC^2$, we have
 \begin{gather*}
  \sup_{\{|z-z_0|\le r\}\times\{|w-w_0|\le\epsilon\}}\phi_n
  \le\biggl(\sup_{\{|z-z_0|=r\}\times\{|w-w_0|=\epsilon\}}g^+\biggr)
  +O(nd^{-n})\quad\text{as }n\to\infty.
 \end{gather*}
 Hence the latter assertion also holds.
\end{proof}

Let us see
\begin{gather}
\lim_{n\to \infty}\frac{[\det(D(f^n)-A)]}{d^n-1}=T^+\quad\text{on }\bP^2
\tag{\ref{eq:equidistforward}$'$}\label{eq:normalized}
\end{gather}
as currents. First, 
let $\tilde{S}=\lim_{j\to\infty}[\det(D(f^{n_j})- A)]/(d^{n_j}-1)$
be any limit point, which is also a positive closed $(1,1)$-current on $\bP^2$ 
of mass $1$, of the sequence $([\det(D(f^n)- A)]/(d^n-1))_{n}$
of positive closed $(1,1)$-currents on $\bP^2$ of masses $1$. 
On the other hand, by Lemma \ref{th:unifconv} and
the {\em compactness principle} for
plurisubharmonic functions on a domain in $\bC^N$,
taking a subsequence of $(n_j)$ if necessary,
there is a plurisubharmonic function $\phi$ on $\bC^2$ such that
$\phi=\lim_{j\to\infty}\phi_{n_j}$ in $L^1_{\mathrm{loc}}(\bC^2,m_{4})$,
where $m_4$ is the Lebesgue measure on $\bC^2$.
Then we have $\tilde{S}|\bC^2=\rd\rd^c\phi$ on $\bC^2$ and,
by the former half in Lemma \ref{th:unifconv},
the plurisubharmonicity of $\phi$ on $\bC^2$, 
and the pluriharmonicity of $g^+$ on $B^+$,
we also have $\phi\equiv g^+$ on $B^+\cap\bC^2$.
Hence $\supp(\tilde{S}|\bC^2)\subset K^+$. 
Next, let $S$ be the {\em trivial extension} of $\rd\rd^c\phi$ to $\bP^2$ across $L_\infty$. 
It is a positive closed $(1,1)$-current on $\bP^2$
(cf.\ \cite[Theorem 2.7]{DS}) and supported by $\overline{K^+}= K^+\cup  I^+$.
Then by the uniqueness of $T^+$ mentioned above among such currents, 
there is $c\ge 0$ such that $S=c\cdot T^{+}$ on $\bP^2$.
Moreover, for the current of integration $[L]$
along any projective line $L\subset\bP^2\setminus  I^+$
other than $L_\infty$ and passing through $I^-$, if $R\gg 1$,
then we have $\phi\equiv g^+$ on 
$\{(z,w)\in\bC^2:\|(z,w)\|>R-1\}\cap L\subset B^+$, and in turn, 
recalling the definition of $S,T^+$ and using Stokes's formula,
we have
\begin{gather*}
c-1=\int_{\P^2} (S -T^+) \wedge [L] 
= \int_{\{\|(z,w)\|\le R\}} \rd\rd^c(\phi-g^+)\wedge [L] 
= \int_{\{\|(z,w)\|\le R\}\cap L} \rd\rd^c(\phi-g^+)
=0  
\end{gather*} 
(cf.\ \cite[Proof of Lemma 6.3]{DS}).
Hence $S=T^+$ on $\bP^2$. 
Consequently, $S|\bC^2=T^+|\bC^2=\rd\rd^c\phi=\tilde{S}|\bC^2$ on $\bC^2$, 
and then $\tilde{S}\ge S$ on $\bP^2$ by their construction.
Since both $\tilde{S},S$ are of masses $1$, 
we conclude that $\tilde{S}=S= T^+$ on $\bP^2$. 
Hence \eqref{eq:normalized} holds.

\begin{proof}[Proof of Theorem \ref{th:eigenvalue_automorphism}]
 Let $f$ be a H\'enon-type polynomial automorphism of $\C^2$ of degree $d>1$.
 Fix $\lambda\in\bC^*$, and set $A=\lambda I_2\in \mathrm{M}(2,\bC)$. Then
 using the chain rule and the equivariance of $T^+$ under affine coordinate changes
 on $\bC^2$, 
 we can assume that $f$ satisfies the normalization \eqref{eq:indeterminancies}, without loss of generality. Noting also that $A=\lambda I_2$ 
 satisfies the condition \eqref{eq:nonvanish}, the desired \eqref{eq:equidistforward}
 as currents on $\bP^2$ is nothing but \eqref{eq:normalized} 
 as currents on $\bP^2$ for this $A=\lambda I_2$.
\end{proof}

\begin{acknowledgement}
The first author was partially supported by JSPS Grant-in-Aid 
for Scientific Research (C), 15K04924. The authors were partially 
supported by Invitational Fellowships for Research in Japan 
(Short-term S18024) JSPS BRIDGE Fellowship 2018, and
thank for the hospitality of Universit\'e de Picardie Jules Verne and
Kyoto Institute of Technology, where they visited each other 
in 2018 and this work grew up.
\end{acknowledgement}

\end{document}